\newtheorem{theorem}{Theorem}
\newtheorem{lemma}[theorem]{Lemma}
\newtheorem{corollary}[theorem]{Corollary}
\newtheorem*{defn}{Definition}
\newtheorem*{prop*}{Proposition}
\newtheorem*{conj*}{Conjecture}
\newtheorem*{fact*}{Fact}
\newtheorem{prop}{Proposition}
\newtheorem*{ex*}{Example}
\theoremstyle{remark}
\newtheorem{rem}{Remark}}
\theoremstyle{definition}
\g@addto@macro\bfseries{\boldmath}
\newcommand*{\claimproofname}{Proof of claim}
\DeclareMathOperator*{\E}{\mathbb{E}}
\newcommand{\eps}{\varepsilon}
\newcommand{\om}{\omega}
\newcommand{\Om}{\Omega}
\newcommand{\grpord}{K}
\newcommand{\supp}{\textnormal{supp}}
\DeclareMathOperator*{\argmax}{arg\,max}
\newcommand{\Gtrsim}[1]{\gtrsim_{{\textstyle\mathstrut}{#1}}}
\newcommand{\Lesssim}[1]{\lesssim_{{\textstyle\mathstrut}{#1}}}
\DeclareMathSymbol{\shortminus}{\mathbin}{AMSa}{"39}
\numberwithin{equation}{section} 
\numberwithin{figure}{section}
\numberwithin{table}{section}
\newcommand{\R}{\mathbf{R}}
\newcommand{\C}{\mathbf{C}}
\newcommand{\T}{\mathbf{T}}
\newcommand{\D}{\mathbf{D}}
\newcommand{\Z}{\mathbf{Z}}
\newcommand{\iu}{\mathbf{i}}
\newcommand{\underbracedmatrix}[2]{%
	\left(\;
	\smash[b]{\underbrace{
			\begin{matrix}#1\end{matrix}
		}_{#2}}
	\;\right)
	\vphantom{\underbrace{\begin{matrix}#1\end{matrix}}_{#2}}
}
\begin{document}

\begin{frontmatter}[classification=text]


\author[joe]{Joseph Slote\thanks{Supported by Chris Umans' Simons Institute Investigator Grant.}}
\author[sasha]{Alexander Volberg\thanks{Supported by NSF DMS-1900286, DMS-2154402 and by Hausdorff Center for Mathematics.}}
\author[haonan]{Haonan Zhang}

\begin{abstract}
Consider $f:\Omega^n_K \to \mathbf{C}$ a function from the $n$-fold product of multiplicative cyclic groups of order $K$.
	Any such $f$ may be extended via its Fourier expansion to an analytic polynomial on the polytorus $\mathbf{T}^n$, and the set of such polynomials coincides with the set of all analytic polynomials on $\mathbf{T}^n$ of individual degree at most $K-1$.
	
	In this setting it is natural to ask how the supremum norms of $f$ over $\mathbf{T}^n$ and over $\Omega_K^n$ compare.
	We prove the following \emph{discretization of the uniform norm} for low-degree polynomials: if $f$ has degree at most $d$ as an analytic polynomial, then $\|f\|_{\mathbf{T}^n}\leq C(d,K)\|f\|_{\Omega_K^n}$ with $C(d,K)$ independent of dimension $n$.
	As a consequence we also obtain a new proof of the Bohnenblust--Hille inequality for functions on products of cyclic groups.
	
	Key to our argument is a special class of Fourier multipliers on $\Omega_K^n$ which are $L^\infty\to L^\infty$ bounded independent of dimension when restricted to low-degree polynomials. This class includes projections onto the $k$-homogeneous parts of low-degree polynomials as well as projections of much finer granularity. 

\end{abstract}
\end{frontmatter}



\section{Introduction}
	We say an analytic polynomial $f:\T^n\to \C$ has \emph{(total) degree} at most $d$ and \emph{individual degree} at most $K-1$ if it has the (Fourier) expansion
	\begin{equation}
	\label{eq:fourier-exp}
		f(z)=\sum_{\alpha\in \{0,1,\dots, K-1\}^n:|\alpha|\le d}\widehat{f}(\alpha)z^{\alpha},\qquad \widehat{f}(\alpha)\in \C.
	\end{equation}
	We shall use the notation $|\alpha|:=\sum_j\alpha_j$ and $z^\alpha:=\prod_{j}z^{\alpha_j}$ throughout.
	Fixing $K\ge 1$ a positive integer, let $\Omega_K:=\{1,\omega_K, \dots, \omega_K^{K-1}\}$ denote the multiplicative cyclic group of order $K$, where $\omega_K:=e^{2\pi \iu/K}$.
	
	The subject of this paper is a certain \emph{discrete Remez-type inequality} (or \emph{discretization of the uniform norm}) for analytic polynomials on the polytorus $\T^n$.
	As observed in \cite{DMP}, when $\grpord=2$ (so $f$ is multi-affine) a comparison of Klimek \cite{K} entails that
	\begin{equation}
	\label{eq:k2case}
		\|f\|_{\T^n}\leq (1+\sqrt2)^d\|f\|_{\Om_2^n}\,,
	\end{equation}
	where here and throughout $\|f\|_X:=\sup_{x\in X}|f(x)|$ denotes the supremum norm.
	In the sequel we prove that the dimension-free comparison \eqref{eq:k2case} is in fact a special case of a phenomenon that holds true for analytic polynomials of any individual degree:
	
	\begin{theorem}\label{thm: remez}
	Let $d,n\ge 1, \grpord\ge 2$. Suppose $f$ is an analytic polynomial of degree at most $d$ and individual degree at most $K-1$.
	Then
\begin{equation}\label{ineq:remez cyclic}
	\|f\|_{\T^n}\leq C(d,K)\|f\|_{\Omega_K^n}
\end{equation}
for some constant $C(d,\grpord)$ depending on $d$ and $\grpord$ only.
\end{theorem}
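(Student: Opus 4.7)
The plan is to reduce Theorem~\ref{thm: remez} to the multi-affine case \eqref{eq:k2case}, using the paper's dimension-free Fourier multiplier bounds together with the classical polarization of symmetric multilinear forms.

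First I would decompose $f = \sum_{k=0}^{d} f_k$ into its $k$-homogeneous components. Projection onto the $k$-homogeneous subspace is one of the Fourier multipliers the abstract highlights as being $L^\infty(\Omega_K^n)\to L^\infty(\Omega_K^n)$ bounded with norm independent of $n$ when restricted to polynomials of total degree at most $d$. This gives $\|f_k\|_{\Omega_K^n}\leq C_1(d,K)\,\|f\|_{\Omega_K^n}$ for each $k$, so it suffices to prove \eqref{ineq:remez cyclic} for each $f_k$ separately.

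Next I would polarize: let $L_k \colon (\C^n)^k \to \C$ be the unique symmetric $k$-linear form satisfying $L_k(z,\ldots,z)=f_k(z)$. Viewed as a function on $\T^{kn}$, $L_k$ is multi-affine of total degree $k$. Applying \eqref{eq:k2case} to $L_k$, together with the trivial restriction bound $\|f_k\|_{\T^n}\leq\|L_k\|_{\T^{kn}}$ obtained by setting all $k$ arguments equal, I arrive at
\[
\|f_k\|_{\T^n}\;\leq\;(1+\sqrt 2)^k\,\|L_k\|_{\Omega_2^{kn}}.
\]

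The main obstacle is the remaining step: bounding $\|L_k\|_{\Omega_2^{kn}}$ by $\|f_k\|_{\Omega_K^n}$ with a constant $C_2(k,K)$ depending only on $k$ and $K$. The coefficients of $L_k$ are explicit combinatorial averages of the $\widehat f_k(\alpha)$, so the task is to return from a symmetric multilinear expansion sampled on $\{\pm 1\}^{kn}$ to the native cyclic-group sampling of $f_k$ on $\Omega_K^n$. I expect this is precisely where the ``projections of much finer granularity'' mentioned in the abstract enter: by further projecting $f_k$ onto monomials with a fixed exponent profile (for instance, grouping indices $j$ by the value of $\alpha_j\in\{1,\ldots,K-1\}$), one should be able to express each $\Omega_2^{kn}$-evaluation of $L_k$ as a controlled combination of $f_k$'s values on $\Omega_K^n$. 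Carrying this out with constants independent of $n$ is the delicate heart of the argument; once it is in hand, combining with the previous two steps yields Theorem~\ref{thm: remez} with $C(d,K)$ of the form $C_1(d,K)\cdot\max_{k\leq d}(1+\sqrt 2)^k\, C_2(k,K)$.
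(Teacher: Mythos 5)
Your proposal takes a genuinely different route from the paper's, but it has two gaps, one of which is fatal to the argument as written.

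\textbf{Circularity in Step 1.} You invoke the boundedness of the $k$-homogeneous projection $f\mapsto f_k$ on $L^\infty(\Omega_K^n)$ as a prior fact. In this paper, however, that boundedness is derived \emph{from} Theorem~\ref{thm: remez} (see the Remark following the theorem statement, which combines a Cauchy estimate on $\T^n$ with \eqref{ineq:remez cyclic} to get $\|f_k\|_{\Omega_K^n}\le C(d,K)\|f\|_{\Omega_K^n}$). The multiplier bounds the paper establishes independently are those for the ``inseparable parts'' (Proposition~\ref{prop:Property A}), which are organized by support size and the factor $\tau_\alpha$, not by total degree; and even assembling them into a $k$-homogeneous projection bound is only transparent when $d<2K$, since inseparability fixes total degree only modulo $2K$. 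So you cannot cite this as a black box without reproving it.

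\textbf{The unproven final inequality is as hard as the theorem.} The missing step $\|L_k\|_{\Omega_2^{kn}}\lesssim_{k,K}\|f_k\|_{\Omega_K^n}$ is not a small technical gap. The polarization formula
\[
L_k\big(z^{(1)},\ldots,z^{(k)}\big)=\frac{1}{2^k k!}\sum_{\epsilon\in\{\pm1\}^k}\epsilon_1\cdots\epsilon_k\, f_k\Big(\sum_{i=1}^k\epsilon_i z^{(i)}\Big)
\]
evaluates $f_k$ at points whose coordinates are sums of $k$ signs, i.e.\ lie in $\{-k,-k+2,\ldots,k\}$ --- far outside $\Omega_K$, where $\|f_k\|_{\Omega_K^n}$ gives control. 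There is no direct way to funnel such evaluations back through the cyclic grid, and the ``finer'' multipliers of the paper act on $\Omega_K^n$, not on $\Omega_2^{kn}$, so it is unclear how they would help. In fact, the inequality you need follows easily from Theorem~\ref{thm: remez} itself (via $\|L_k\|_{\T^{kn}}\le\frac{k^k}{k!}\|f_k\|_{\T^n}$ and then the Remez bound), which strongly suggests any direct proof of it must contain the theorem's core difficulty. You correctly identify this as the delicate heart of the argument, but you have not supplied it.

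For contrast, the paper's route avoids polarization entirely: Step~1 shows $\|f\|_{\T^n}\le C_K^d\|f\|_{\Omega_{2K}^n}$ by representing each point of a small circle $\eps_*\T$ as the barycenter of a probability measure on $\Omega_{2K}$ that matches its first $K-1$ moments; Step~2 reduces $\|f\|_{\Omega_{2K}^n}\lesssim\|f\|_{\Omega_K^n}$ to a single-point estimate $|f(\sqrt\omega,\ldots,\sqrt\omega)|\lesssim\|f\|_{\Omega_K^n}$, which it handles by splitting $f$ into inseparable parts via the iterated multiplier $\mathfrak{D}$ (Property~A) and observing each inseparable part is controlled at $\sqrt\omega$ by a single evaluation at $\vec1$ (Property~B). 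If you want to pursue your polarization strategy, you would need an independent argument for $\|L_k\|_{\Omega_2^{kn}}\lesssim_{k,K}\|f_k\|_{\Omega_K^n}$; absent that, the proof is incomplete.
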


\begin{rem}
	Individual degree is never more than total degree, so we also have more simply that for any analytic polynomial $f:\T^n\to \C$ with $\deg(f)< d,$
	\[\|f\|_{\T^n}\Lesssim{d}\|f\|_{\Omega_{d}^n}\,.\]
	The notation $A\Lesssim{d}B$ means $A\le C(d)B$ for some constant $C(d)>0$ depending on $d$ only.
\end{rem}

The key feature of \eqref{ineq:remez cyclic} is its lack of dependence on dimension.
We are not too concerned with the explicit constant $C(d,\grpord)$ here;
Theorem \ref{thm: remez} is generalized and improved in a later work \cite{BKSVZ} by Becker, Klein, and the present authors, where an explicit constant $C(d,K)= (\mathcal{O}(\log \grpord))^{2d}$ for \eqref{ineq:remez cyclic} is proved, along with similar results for much more general sampling sets than $\Om_\grpord^n$.

To some extent one may consider the present paper an important step toward these later improvements \cite{BKSVZ}.
However, the techniques in the present work are different and of independent interest.
Whereas the main technique of \cite{BKSVZ} is a probabilistic argument to establish a special polynomial interpolation formula, the present work develops a new class of Fourier multipliers which are bounded independent of dimension when applied to low-degree polynomials.

More concretely, for an $S\subset\{0,1,\ldots, K-1\}^n$ consider the $S$-part of $f$: \[f_S := \sum_{\alpha\in S}\widehat{f}(\alpha)z^\alpha\, .\]
We show that for a rich collection of $S$'s it holds that
\begin{equation}
    \label{ineq:f-mult}
    \|f_S\|_{\Om_\grpord^n}\Lesssim{d,K} \|f\|_{\Om_\grpord^n}\, ,
\end{equation}
for $f$ of degree at most $d$ and  individual degree at most $\grpord-1$.
Related results are used throughout but the class of $S$'s is studied most directly in Sections \ref{sec:boundedness} and \ref{sec:aside}.
When $K$ is prime this class has an explicit and self-contained description, following from some results in transcendental number theory.

\begin{theorem}
	\label{cor:bounded-parts-intro}
	Suppose $\grpord$ is an odd prime and let $S$ be a maximal subset of $\{0,1,\ldots, \grpord-1\}^n$ such that for all $\alpha, \beta\in S$:
	\begin{itemize}
		\item Support sizes are equal: $|\supp(\alpha)|=|\supp(\beta)|$.
		\item Degrees are equal: $|\alpha|=|\beta|$.
		\item Individual degree symmetry: there is a bijection $\pi:\supp(\alpha)\to\supp(\beta)$ such that for all $j\in\supp(\alpha)$, $\alpha_j=\beta_{\pi(j)}$ or $\alpha_j = \grpord-\beta_{\pi(j)}$.
	\end{itemize}
	Then for any $n$-variate analytic polynomial $f$ of degree at most $d$ and individual degree at most $K-1$, the $S$-part of $f$ satisfies:
	\[\|f_S\|_{\Om_\grpord^n}\Lesssim{d,\grpord}\|f\|_{\Om_\grpord^n}\,.\]
\end{theorem}
\noindent Here the support of a monomial $z^\alpha$ is defined to be $\supp(\alpha):=\{j:\alpha_j\neq 0\}$, and the support size $|\supp(\alpha)|$ refers to the cardinality of $\supp(\alpha)$.
Theorem \ref{cor:bounded-parts-intro} and related techniques do not seem to follow from the argument in \cite{BKSVZ} and can be considered as one of the main contributions of this work.

\medskip
We prove Theorem \ref{thm: remez} in Section \ref{sec:proof} and Theorem \ref{cor:bounded-parts-intro} as Corollary \ref{cor:bounded-parts} in Section \ref{sec:aside}.
Here we remark on some interpretations and consequences of \eqref{ineq:remez cyclic}.

\begin{rem}
	Theorem \ref{thm: remez} can be viewed as a \emph{generalized maximum modulus principle} since it implies the dimension-free boundedness on the entire polydisk: For any analytic polynomial $f:\D^n\to \C$ of degree at most $d$ and individual degree $\grpord-1$ we have
\begin{equation*}
	\|f\|_{\D^n}\Lesssim{d,\grpord} \|f\|_{\Omega_\grpord^n},
\end{equation*}
where $\D:=\{z\in \C:|z|\le 1\}$ is the closed unit disk.

Moreover, let $f$ be as in Theorem \ref{thm: remez}. By a standard Cauchy estimate, we have $\|f_k\|_{\T^n}\le \|f\|_{\T^n}$ with $f_k$ being the $k$-homogeneous part of $f$. This, together with \eqref{ineq:remez cyclic}, implies 
\begin{equation}\label{ineq:generalized cauchy}
\|f_k\|_{\Omega_K^n}\le 	\|f_k\|_{\T^n}\le \|f\|_{\T^n}\le  C(d,K)\|f\|_{\Omega_K^n}.
\end{equation}
So a Cauchy-type estimate holds for $\Omega_K^n$ as well.
\end{rem}

\subsection{Functions on \texorpdfstring{$\Om_\grpord^n$}{Ωₖⁿ} and the Cyclic-group Bohnenblust--Hille Inequality}

A central application of Theorem \ref{thm: remez} is the study of functions $f:\Om_\grpord^n\to \C$ on products of cyclic groups.
Any such $f$ may be extended via its Fourier expansion to an analytic polynomial on $\T^n$ with individual degree at most $\grpord-1$.
In this way Theorem \ref{thm: remez} implies the $L^\infty\to L^\infty$-boundedness of this extension map when $f$ is of bounded total degree.

As an immediate corollary we obtain a Bohnenblust--Hille-type inequality for functions on $\Om_\grpord^n$.
The original Bohnenblust--Hille (BH) inequality \cite{BH} states
	\begin{equation}
	\label{eq:BH}
		{\|\widehat f\|_{\frac{2d}{d+1}}\Lesssim{d}\|f\|_{\T^n}}
	\end{equation}
	for analytic polynomials $f:\T^n\to \C$ of degree at most $d$.
Here $\|\widehat{f}\|_p$ denotes the $\ell^p$-norm of the Fourier coefficients of $f$; that is,
\[\|\widehat{f}\|_{p}:=\left(\textstyle\sum_{\alpha}|\widehat{f}(\alpha)|^p\right)^{1/p},\]
for $f$ expanded as in \eqref{eq:fourier-exp} (with $\grpord=\infty$).
Again one key property of the BH inequality is its dimension-freeness.
Combining \eqref{eq:BH} with \eqref{ineq:remez cyclic} we obtain:

\begin{corollary}[Cyclic-group Bohnenblust--Hille]
Let $d,n\ge 1, \grpord\ge 2$. Let $f:\Om_\grpord^n\to\C$ with $\deg(f)\leq d$. 
Then
	\[\|\widehat f\|_{\frac{2d}{d+1}}\Lesssim{d,\grpord}\|f\|_{\Om_\grpord^n}\,.\]
\end{corollary}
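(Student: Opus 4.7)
The corollary should follow almost immediately by concatenating the classical Bohnenblust--Hille inequality \eqref{eq:BH} with the Remez-type bound of Theorem \ref{thm: remez}. My plan is as follows.

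First, I would observe that any $f\colon\Om_\grpord^n\to\C$ extends uniquely to an analytic polynomial $\tilde f\colon\T^n\to\C$ via the Fourier expansion indexed by $\alpha\in\{0,1,\dots,\grpord-1\}^n$, as noted in the introduction: this is because characters of $\Om_\grpord^n$ are exactly the monomials $z^\alpha$ with $\alpha$ in that range. Crucially, if $\deg(f)\le d$ then $\tilde f$ is an analytic polynomial on $\T^n$ of total degree at most $d$ and individual degree at most $\grpord-1$, and its Fourier coefficients coincide with those of $f$, so $\|\widehat{\tilde f}\|_{p}=\|\widehat f\|_p$ for every $p$.

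Next I would apply the classical Bohnenblust--Hille inequality \eqref{eq:BH} to $\tilde f$, yielding
\[\|\widehat f\|_{\frac{2d}{d+1}}=\|\widehat{\tilde f}\|_{\frac{2d}{d+1}}\Lesssim{d}\|\tilde f\|_{\T^n}.\]
I would then invoke Theorem \ref{thm: remez}, which is directly applicable because $\tilde f$ has degree $\le d$ and individual degree $\le \grpord-1$, to conclude
\[\|\tilde f\|_{\T^n}\le C(d,\grpord)\,\|\tilde f\|_{\Om_\grpord^n}=C(d,\grpord)\,\|f\|_{\Om_\grpord^n}.\]
Chaining the two estimates gives the desired inequality $\|\widehat f\|_{\frac{2d}{d+1}}\Lesssim{d,\grpord}\|f\|_{\Om_\grpord^n}$.

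There is no real obstacle here: once Theorem \ref{thm: remez} is in hand, the corollary is a one-step deduction. The only points that deserve a brief verification are (i) that the extension $\tilde f$ has the correct individual and total degrees to allow Theorem \ref{thm: remez} to apply, and (ii) that the Fourier coefficients are preserved under the extension, so that the $\ell^{2d/(d+1)}$ norm on the cyclic-group side agrees with the one on the polytorus side. Both are immediate from the definitions.
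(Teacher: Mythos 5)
Your argument is exactly the one the paper uses: extend $f$ to the analytic polynomial $\tilde f$ on $\T^n$ (same Fourier coefficients, total degree $\le d$, individual degree $\le \grpord-1$), apply the classical Bohnenblust--Hille inequality \eqref{eq:BH}, and then Theorem \ref{thm: remez} to pass from $\|\tilde f\|_{\T^n}$ to $\|f\|_{\Om_\grpord^n}$. The two verification points you flag are indeed the only things to check, and both are immediate; the proposal is correct and matches the paper.
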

The cyclic-group Bohnenblust--Hille inequality was originally proved in \cite{SVZbh} with an argument avoiding Theorem \ref{thm: remez}.
Even though the cyclic-group BH inequality for $2< \grpord<\infty$ interpolates between the now well-understood polytorus ($\grpord=\infty$) and hypercube ($\grpord=2$) cases of the BH inequality (\cite{BH} and \cite{blei,DMP} respectively), the $2< \grpord<\infty$ case does not appear to follow from the ``standard recipe'' for BH inequalities---and so a new fact, such as Theorem \ref{thm: remez}, is needed.
See \cite{SVZbh} for an explanation of the challenges involved, as well as for extensions to BH inequalities for discrete quantum systems in the spirit of \cite{VZ22}.

\subsection{Discrete Remez-type inequalities and discretizations of the uniform norm}
\label{sec:remez}

Theorem \ref{thm: remez} can be understood as a dimension-free refinement of (a special case of) existing \emph{discrete Remez-type inequalities}.
It can also be considered as a \emph{discretization of the uniform norm} (also known as a \emph{Bernstein-type discretization inequality}).
We discuss these connections in order.

\subsubsection*{Remez-type inequalities in many dimensions}
Consider $J$ a finite interval in $\R$ and a subset $E\subset J$ with positive Lebesgue measure $\mu(E)>0$.
Let $f:\R\to \R$ be a real polynomial of degree at most $d$.
The classical Remez inequality \cite{R} states that 
\begin{equation}\label{ineq:remez}
	\max_{x\in J}|f(x)|\le \left(\frac{4\mu(J)}{\mu(E)}\right)^d \max_{x\in E}|f(x)|.
\end{equation}

Despite a large literature extending \eqref{ineq:remez}, we are not aware of any direct multi-dimensional generalizations that are dimension-free.
Multi-dimensional versions of the Remez inequality are considered in the papers of Brudnyi and Ganzburg \cite{BG}, Ganzburg \cite{G}, Kro\'o and Schmidt \cite{KS} but they are not at all dimension-free: it is instructive to take a look at inequality (23) in \cite{KS} and see how the estimates blow up with dimension (called $m$ in \cite{KS}).
If one abandons the $L^\infty$ norm on the left-hand side of \eqref{ineq:remez} then something can be said; there are distribution function inequalities for volumes of level sets of polynomials that are dimension-free, see \cite{Fr,NSV,NSV1}.
But those are distribution function estimates, not $L^\infty$ estimates. Some other related results include Nazarov's extension \cite{Nazarov} of Tur\'an's inequality \cite{T}, as well as more generalizations \cite{FM,FY}.  

The lack of a dimension-free multi-dimensional Remez inequality of the form \eqref{ineq:remez} is not surprising: there is no hope for such an inequality phrased in terms of $\mu(E)$ for any positive-measure $E\subseteq J$.
This can already be seen when $J$ is a unit ball in $\R^n$ and $f_n(x)=1-\sum_{j=1}^n x_j^2$.
For large $n$, most of the volume of the ball is concentrated in a neighborhood of the unit sphere where $f_n$ is very small.

However, this observation does not preclude the existence of \emph{certain} sets $E$ giving multi-dimensional analogues of \eqref{ineq:remez} that are dimension-free.
Indeed, Lundin \cite{L}, and later Aron--Beauzamy--Enflo \cite{ABE} and Klimek \cite{K}, show this is possible in certain cases of $(J,E)$ with convex $E$, such as for bounded-degree polynomials over the polydisk $J=\D^n$ and the real cube $E=[-1,1]^n$.
As an explicit example, with the prevailing notation, Klimek \cite{K} showed that for $n$-variate analytic polynomials of degree $d$, we have the comparison $\|f\|_{\D^n}\leq (1+\sqrt{2})^d\|f\|_{[-1,1]^n}$.

On the other hand, it was not at all clear when dimension-free Remez inequalities should exist in non-convex settings like $J=\T^n$ and $E\subset \T^n$.
The arguments in \cite{L,ABE,K} make essential use of the convexity of the testing set $E$ and do not seem to suitably generalize.\footnote{If one considers only \emph{multi-affine} (or individual degree at most 1) polynomials $f$, then $\|f\|_{[-1,1]^n}=\|f\|_{\Om_2^n}$, and by Klimek \cite{K} one obtains \eqref{eq:k2case}; that is, $\|f\|_{\T^n}\leq {(1+\sqrt{2})^d}\|f\|_{\Om_2^n}$.
This was observed in \cite{DMP}.
But this line of argument does not appear to extend beyond the class of multi-affine polynomials.}
In comparison, for our application to functions on products of cyclic groups $f:\Om_\grpord^n\to\C$, we have no choice but to use the non-convex grid $\Om_\grpord^n$ as our $E$.%

That our $E$ is discrete and indeed finite is another interesting feature.
Remez-type estimates with discrete $E$ were known before; notably, Yomdin \cite{Y} (see also \cite{BY}) identifies a geometric invariant which directly replaces the Lebesgue measure in \eqref{ineq:remez} and is positive for certain finite sets $E$---though the comparison is not dimension-free.%

It is natural to ask for what other (discrete) sets $E\subset \mathbf{D}^n$ a dimension-free comparison might hold.
In \cite{BKSVZ} our Theorem \ref{thm: remez} is extended to a much larger class of testing sets, but we are far from a full characterization of such $E$.

\subsubsection*{Discretizations of the uniform norm}

In one dimension ($n=1$) the inequality \eqref{ineq:remez cyclic} is a classical theorem of Bernstein and generalizations are known as \emph{discretizations of the uniform norm} or \emph{Bernstein-type discretization inequalities} (see \cite{bernstein31,bernstein32} and \cite[Chapter X, Theorem (7.28)]{Zygmund}).
We refer to surveys \cite{DPTT19,KKLT22} and references therein for more historical background about norm discretizations.

In the high-dimensional case, Theorem \ref{thm: remez} can be understood as a Bernstein-type discretization inequality for bounded-degree multivariate polynomials in many dimensions $n$.
Such inequalities have been the subject of much study in approximation theory.
However, existing high-dimensional Bernstein-type estimates do not seem to apply to our situation when the sampling set is the fixed discrete torus $\Omega_\grpord^n$. We refer to \cite{BKSVZ} for more discussion and comparison of our work with known literature. 

\section{The Proof}
\label{sec:proof}

As the $K=2$ case was known, we will focus on proving the $K\geq 3$ case.
This proof uses some ideas and techniques from \cite{SVZ,SVZbh}. Recall that we need to prove 
\begin{equation*}
	\|f\|_{\T^n}\Lesssim{d,\grpord}\|f\|_{\Omega_\grpord^n}
\end{equation*}
for all analytic polynomials $f:\T^n\to \C$ of degree at most $d$ and individual degree at most $\grpord-1$.
For this, we divide the proof into two steps:
\begin{align*}
		\text{Step 1.}\quad \|f\|_{\T^n}\hspace{0.4em}&\Lesssim{d,\grpord}\|f\|_{\Omega_{2\grpord}^n}, \quad \text{and}\\[0.5em]
	\text{Step 2.} \quad \|f\|_{\Omega_{2\grpord}^n}&\Lesssim{d,\grpord}\|f\|_{\Omega_{\grpord}^n}\,.
\end{align*}

\subsection{Step 1}

\begin{prop}[Torus bounded by \unboldmath$\Omega_{2\grpord}$]
			\label{prop:Om2K-boundedness}
	Let $d,n\ge 1, K\ge 3$. Let $f:\T^n\to \C$ be an analytic polynomial of degree at most $d$ and individual degree at most $\grpord-1$.
	Then 
	\begin{equation*}
				\|f\|_{\T^n}\le C_\grpord^d\|f\|_{\Omega_{2\grpord}^n}\,,
	\end{equation*}
where $C_\grpord\ge 1$ is a universal constant depending on $\grpord$ only.
\end{prop}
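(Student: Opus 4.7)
The plan is to represent $f(z)$ for arbitrary $z\in\T^n$ as a dimension-free $\ell^1$-combination of the values $\{f(w)\}_{w\in\Omega_{2\grpord}^n}$ via an interpolation kernel truncated to the Fourier modes actually carried by $f$. Orthogonality of the $2\grpord$-th roots of unity gives the one-dimensional reproducing identity for any polynomial $p$ of degree $\le \grpord-1$ on $\T$,
\[p(z) = \sum_{\eta\in\Omega_{2\grpord}} p(\eta)\,A_\grpord(z/\eta),\qquad A_\grpord(u) := \frac{1}{2\grpord}\sum_{k=0}^{\grpord-1}u^k.\]
Tensoring across coordinates yields $f(z) = \sum_{w\in \Omega_{2\grpord}^n} f(w)\prod_{j=1}^n A_\grpord(z_j/w_j)$ for any $f$ of individual degree $\le \grpord-1$.

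Exploiting the constraint $\deg f\le d$, one can truncate the kernel to the Fourier modes $S := \{\alpha:\, |\alpha|\le d,\,\alpha_j\le\grpord-1\}$ that actually carry $f$: the truncated kernel
\[K_d(z,w) := \frac{1}{(2\grpord)^n}\sum_{\alpha\in S}(z/w)^\alpha\]
still reproduces $f$, so that
\[\|f\|_{\T^n}\le \|f\|_{\Omega_{2\grpord}^n}\cdot \sup_{z\in \T^n}\sum_{w\in\Omega_{2\grpord}^n}|K_d(z,w)|,\]
and the proposition reduces to a dimension-free $\ell^1$-bound on $K_d$. A naive product estimate across coordinates gives only $(\log \grpord)^n$, which is fatally $n$-dependent; the total-degree constraint has to do genuine work.

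The main obstacle is precisely this $\ell^1$-bound, and it is where the Fourier multiplier machinery of the paper enters. By duality, the quantity $\sup_z\sum_w|K_d(z,w)|$ is the operator norm of the extension map $E:\ell^\infty(\Omega_{2\grpord}^n)\to L^\infty(\T^n)$ defined by $Eg(z) = \sum_w g(w) K_d(z,w)$, which is a ``project-onto-$S$-then-extend-to-torus'' map factoring through the $S$-projection $P_S$ on $\Omega_{2\grpord}^n$. The paper's multiplier results (Sections~\ref{sec:boundedness}--\ref{sec:aside}) are precisely designed to show that $P_S$, along with a richer class of ``slab-like'' projections, is $L^\infty\!\to\!L^\infty$ bounded dimension-freely, with constant controlled by $C_\grpord^d$. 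Combining this boundedness with an inductive bootstrap on $d$---to pass from the $L^\infty(\Omega_{2\grpord}^n)$-boundedness of $P_S$ to the desired $L^\infty(\T^n)$-norm of $E$ without circularity---yields the claimed bound $\sup_z\sum_w|K_d(z,w)|\le C_\grpord^d$ and hence the proposition. The real technical weight lies entirely in the $L^\infty$-boundedness of the $S$-projection (rather than the trivial $L^2$ statement); packaging that boundedness as a Remez-type comparison via the kernel $K_d$ is then essentially formal.
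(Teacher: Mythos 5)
Your reproducing-kernel setup is algebraically fine: since $f$ has individual degree $\le K-1$ and $\Om_{2K}$ separates exponents $0,\dots,2K-1$, the truncated kernel $K_d$ does reproduce $f$ from its values on $\Om_{2K}^n$. The fatal gap is that the estimate everything then rests on,
\[
\sup_{z\in\T^n}\sum_{w\in\Om_{2K}^n}|K_d(z,w)|\;\le\;C_K^d,
\]
is false. Already for $d=1$, taking $z=\vec1$,
\[
\sum_{w\in\Om_{2K}^n}|K_1(\vec1,w)|\;=\;\E_{w}\Big|1+\sum_{j=1}^n\overline{w_j}\Big|\;\ge\;\E\Big|\sum_{j=1}^n\overline{w_j}\Big|-1\;\ge\;c\sqrt{n}-1,
\]
where $w$ is uniform on $\Om_{2K}^n$: the $\overline{w_j}$ are iid mean-zero unit-modulus, $\E|S_n|^2=n$ and $\E|S_n|^4=2n^2-n\le 2(\E|S_n|^2)^2$, so Paley--Zygmund (or Khinchin) gives $\E|S_n|\ge\sqrt{n/2}$. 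Equivalently, the $\ell^\infty(\Om_{2K}^n)\to\ell^\infty(\Om_{2K}^n)$ norm of your projection $P_S$ already grows like $\sqrt n$ for $d=1$. The step that loses is precisely $\|f\|_{\T^n}\le\|f\|_{\Om_{2K}^n}\sup_z\sum_w|K_d(z,w)|$: putting absolute values on the kernel upgrades the target inequality (which only needs to hold for $f$ with spectrum in $S$) to a bound over \emph{all} of $\ell^\infty(\Om_{2K}^n)$, and that stronger statement is genuinely false.

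Invoking the paper's multiplier results does not repair this. Proposition~\ref{prop:Property A} works on $\Om_K^n$ rather than $\Om_{2K}^n$, and more importantly it only controls pieces of a polynomial that is \emph{already} assumed to have degree $\le d$; it is not an $L^\infty\to L^\infty$ boundedness statement for $P_S$ on arbitrary bounded functions, which the computation above shows would be false. Corollary~\ref{cor:bounded-parts} is deduced from Theorem~\ref{thm: remez}, hence from the very proposition you are proving, so appealing to it would be circular. The vaguer ``inductive bootstrap on $d$'' cannot save an inequality whose $d=1$ case fails.

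The paper's actual proof avoids the triangle-inequality loss entirely by replacing a signed kernel with genuine \emph{probability} measures, whose $\ell^1$-norm is exactly $1$. Lemma~\ref{lem:probability measure} shows that for every $z\in\C$ with $|z|\le\eps_*(K)$ there is a probability measure $\mu_z$ on $\Om_{2K}$ with $z^m=\E_{\xi\sim\mu_z}\xi^m$ for $0\le m\le K-1$; tensoring gives $f(z)=\E_{\xi}f(\xi)$ for $z\in(\eps_*\T)^n$ and hence $\|f\|_{(\eps_*\T)^n}\le\|f\|_{\Om_{2K}^n}$ with no loss whatsoever. The factor $C_K^d$ enters only via the rescaling $\|f\|_{\T^n}\le(d+1)\eps_*^{-d}\|f\|_{(\eps_*\T)^n}$, which follows from the Cauchy estimate $\|f_k\|_{\T^n}\le\|f\|_{\T^n}$ on homogeneous parts. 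The two ideas missing from your proposal are (i) positivity of the interpolating weights is indispensable, and (ii) such positivity is obtainable only after shrinking to a small polydisk, with the shrinkage paid for by the Cauchy estimate rather than by an $\ell^1$ kernel bound.
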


To prove this proposition, we need the following lemma.

\begin{lemma}
	\label{lem:probability measure}
	Fix $\grpord\ge 3$.
	There exists $\eps=\eps(\grpord)\in (0,1)$ such that, for all $z\in\C$ with $|z|\le \eps$, one can find a probability measure $\mu_z$ on $\Om_{2\grpord}$ such that
	\begin{equation}\label{eq:complex equations to solve}
		z^m=\E_{\xi\sim\mu_z}\xi^m, \qquad \forall \quad 0\le m\le \grpord-1\,.
	\end{equation}
\end{lemma}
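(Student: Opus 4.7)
The plan is to construct $\mu_z$ explicitly via the discrete Fourier transform on $\Z/2\grpord\Z$. Writing the candidate as $\mu_z=\sum_{j=0}^{2\grpord-1}p_j\,\delta_{\om_{2\grpord}^j}$ with $p_j\in\R$, the $\grpord$ complex moment conditions \eqref{eq:complex equations to solve} take the form
\[
\widehat p(m):=\sum_{j=0}^{2\grpord-1}p_j\,\om_{2\grpord}^{jm}=z^m,\qquad 0\le m\le \grpord-1.
\]
Since $p$ is real, symmetry forces $\widehat p(2\grpord-m)=\overline{\widehat p(m)}=\bar z^m$ for $1\le m\le \grpord-1$, which together with the equations above determines every Fourier coefficient of $p$ except $\widehat p(\grpord)$---a single free real parameter. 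The guiding observation is that at $z=0$ the uniform measure $p_j=\tfrac{1}{2\grpord}$ already satisfies the system and lies strictly inside the probability simplex, so small perturbations should still work when $|z|$ is small.

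To make this explicit, set $\widehat p(\grpord)=0$ and invert the DFT to obtain
\[
p_j=\frac{1}{2\grpord}\Biggl(1+2\,\mathrm{Re}\sum_{m=1}^{\grpord-1}\bigl(z\,\om_{2\grpord}^{-j}\bigr)^m\Biggr).
\]
By construction these coefficients are real, they sum to $\widehat p(0)=1$, and they satisfy \eqref{eq:complex equations to solve}. The only remaining task is to ensure $p_j\ge 0$ for all $j$, which reduces to a geometric-series bound:
\[
\Biggl|2\,\mathrm{Re}\sum_{m=1}^{\grpord-1}\bigl(z\,\om_{2\grpord}^{-j}\bigr)^m\Biggr|\le 2\sum_{m=1}^{\infty}|z|^m=\frac{2|z|}{1-|z|}.
\]
Choosing $\eps=1/3$ makes the right-hand side at most $1$, and hence $p_j\ge 0$ for every $j$ and every $|z|\le\eps$. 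Notably $\eps$ can be taken independent of $\grpord$, which is stronger than the lemma asks for.

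The one conceptual point worth emphasizing is that using $\Om_{2\grpord}$ rather than $\Om_\grpord$ is essential: on $\Om_\grpord$ the $\grpord$ moment equations would exactly determine a (generally signed) measure, leaving no room to enforce nonnegativity. Doubling the support supplies the one real degree of freedom $\widehat p(\grpord)$ needed to remain inside the probability simplex. Beyond this I do not anticipate a real obstacle; the proof is essentially Fourier bookkeeping followed by a one-line positivity estimate.
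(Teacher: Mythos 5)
Your proof is correct, and the solution it produces is essentially the paper's perturbation of the uniform measure, but your execution is cleaner and quantitatively stronger. The paper sets up the same underdetermined linear system (fixing the one real degree of freedom by imposing an extra constraint involving $\Re z^\grpord$), proves the resulting $2\grpord\times2\grpord$ real matrix $D_\grpord$ is nonsingular by relating it to a Vandermonde determinant, and then bounds $\|\vec p_z-\vec p_*\|_\infty\le\|D_\grpord^{-1}\|_{\infty\to\infty}\max\{|z|,|z|^\grpord\}$, taking $\eps_*=\bigl(2\grpord\|D_\grpord^{-1}\|_{\infty\to\infty}\bigr)^{-1}$, which depends on $\grpord$. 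You instead work directly in complex Fourier coordinates, make the simpler normalization $\widehat p(\grpord)=0$, and invoke the inverse DFT to write $p_j$ in closed form; the positivity estimate is then a one-line geometric series bound. This gives you the explicit threshold $\eps=1/3$ independent of $\grpord$, which is strictly stronger than what the paper's argument yields and removes the need to estimate $\|D_\grpord^{-1}\|$ at all. The observation that doubling the group size supplies exactly one free real Fourier coefficient, making the system solvable inside the probability simplex, is the same structural insight that underlies the paper's construction. I verified the key computations: $\widehat p(2\grpord-m)=\overline{\widehat p(m)}$ by reality of $p$, the inversion $p_j=\frac{1}{2\grpord}\sum_m\widehat p(m)\,\omega_{2\grpord}^{-jm}$ collapses to your displayed formula once $\widehat p(\grpord)=0$, $\sum_j p_j=\widehat p(0)=1$, and $\frac{2|z|}{1-|z|}\le 1$ exactly when $|z|\le 1/3$. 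No gaps.
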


\begin{proof}
	Put $\theta=2\pi/2\grpord=\pi/\grpord$ and $\omega=\omega_{2\grpord}=e^{\iu\theta}$.
	Fix a $z\in \C$. Finding a probability measure $\mu_z$ on $\Om_{2\grpord}$ satisfying \eqref{eq:complex equations to solve} is equivalent to solving 
	\begin{equation}\label{eq:real equations to solve}
		\begin{cases}
			\sum_{k=0}^{2\grpord-1}p_k=1&\\
			\sum_{k=0}^{2\grpord-1}p_k\cos(km\theta)=\Re z^m& 1\le m\le \grpord-1\\						
			\sum_{k=0}^{2\grpord-1}p_k\sin(km\theta)=\Im z^m& 1\le m\le \grpord-1
		\end{cases}
	\end{equation}
	with non-negative $p_k=\mu_z(\{\omega^k\})$ for $k=0,1,\ldots, 2\grpord-1$. Note that the $p_k$'s are non-negative and thus real.
	
	For this, it is sufficient to find a solution $\vec{p}=\vec{p}_z$ to $D_\grpord \vec p=\vec v_z$ with each entry of $\vec p=(p_0,\ldots, p_{2\grpord-1})^\top$ being non-negative. Here $D_\grpord$ is a $2\grpord\times 2\grpord$ real matrix given by
	\begin{equation*}
		D_\grpord=
		\begin{bmatrix}
			1&1 & 1 &\cdots & 1\\
			1&  \cos(\theta) &\cos(2\theta)&\cdots &\cos((2\grpord-1)\theta)\\
			\vdots & \vdots & \vdots &\vdots \\
			1&\cos(\grpord\theta) &\cos(2\grpord\theta)&\cdots &\cos((2\grpord-1)\grpord\theta)\\[0.3em]
			1&\sin(\theta) &\sin(2\theta)&\cdots &\sin((2\grpord-1)\theta)\\
			\vdots&\vdots & \vdots & \vdots &\vdots \\
			1&\sin((\grpord-1)\theta) &\sin(2(\grpord-1)\theta)&\cdots &\sin((2\grpord-1)(\grpord-1)\theta)\\
		\end{bmatrix},
	\end{equation*}
	and $\vec v_z=(1,\Re z, \dots, \Re z^{\grpord-1},\Re z^{\grpord},\Im z,\dots, \Im z^{\grpord-1})^\top\in \R^{2\grpord}$. Note that \eqref{eq:real equations to solve} does not require the $(\grpord+1)$-th row
	\begin{equation}\label{eq:K+1 row}
		(1,\cos(\grpord\theta),\cos(2\grpord\theta),\dots, \cos((2\grpord-1)\grpord\theta))
	\end{equation}
	of $D_\grpord$.

	The matrix $D_\grpord$ is non-singular. To see this, take any $$\vec{x}=(x_0,x_1,\dots, x_{2\grpord-1})^\top\in \R^{2\grpord}$$ 
	such that $D_\grpord \vec{x}=\vec{0}$. Then 
	\begin{equation}\label{eq:zero}
		\sum_{k=0}^{2\grpord-1}(\omega^k)^m x_k=0,\qquad 0\le m\le \grpord.
	\end{equation}
	This is immediate for $0\le m\le \grpord-1$ by definition, and $m=\grpord$ case follows from the ``additional" row \eqref{eq:K+1 row} together with the fact that $\sin(k\grpord\theta)=0,0\le k\le 2\grpord-1.$
	Conjugating \eqref{eq:zero}, we get
	\begin{equation*}
		\sum_{k=0}^{2\grpord-1}(\omega^k)^m x_k=0,\qquad \grpord\le m\le 2\grpord.
	\end{equation*}
	Altogether, we have 
	\begin{equation*}
		\sum_{k=0}^{2\grpord-1}(\omega^k)^m x_k=0,\qquad 0\le m\le 2\grpord-1,
	\end{equation*}
	that is, $V\vec{x}=\vec{0}$, where $V=V_\grpord=[\omega^{jk}]_{0\le j,k\le 2\grpord-1}$ is a $2\grpord\times 2\grpord$ Vandermonde matrix given by $(1,\omega,\dots, \omega^{2\grpord-1})$. 	Since $V$ has determinant
	\begin{equation*}
		\det(V)=\prod_{0\le j<k\le 2\grpord-1}(\omega^j-\omega^k)\neq 0\,,
	\end{equation*}
	we get $\vec x=\vec 0$. So $D_\grpord$ is non-singular. 
	
	Therefore, for any $z\in\C$, the solution to \eqref{eq:real equations to solve}, thus to \eqref{eq:complex equations to solve}, is given by 
	\[
	\vec p_z=\big(p_0(z),p_1(z),\dots, p_{2\grpord-1}(z)\big)=D_\grpord^{-1}\vec v_z\in \R^{2\grpord}.
	\] 
	
	Notice one more thing about the rows of $D_\grpord$. As
	\[
	\sum_{k=0}^{2\grpord-1} (\omega^k)^m =0,\qquad m=1, 2, \dots, 2\grpord-1\,,
	\]
	we have automatically that vector $\vec p_* := \big(\frac1{2\grpord}, \dots, \frac1{2\grpord}\big)\in \R^{2\grpord}$ gives
	\[
	D_\grpord \vec p_* =(1, 0, 0, \dots, 0)^T=:\vec v_*\,.
	\]
	For any $k$-by-$k$ matrix $A$ denote 
	\[\|A\|_{\infty\to\infty}:=\sup_{\vec 0\neq \vec v\in\R^{k}}\frac{\|A\vec v\|_\infty}{\|\vec v\|_\infty}.\]
	So with $\vec{p_*}:= D^{-1}_\grpord\vec{v_*}$  we have 
	\begin{align*}
		\|\vec p_z-\vec p_*\|_\infty
		&\le \|D_\grpord^{-1}\|_{\infty\to \infty}\|\vec v_z-\vec v_*\|_\infty\\
		&=\|D_\grpord^{-1}\|_{\infty\to \infty}\max\left\{\max_{1\le k\le \grpord}|\Re z^k|,\max_{1\le k\le \grpord-1}|\Im z^k|\right\}\\
		&\le \|D_\grpord^{-1}\|_{\infty\to \infty}\max\{|z|,|z|^\grpord\}.
	\end{align*}
	That is, 
	\begin{equation*}
		\max_{0\le j\le 2\grpord-1}\left|p_j(z)-\frac{1}{2\grpord}\right|
		\le \|D_\grpord^{-1}\|_{\infty\to \infty}\max\{|z|,|z|^{\grpord}\}.
	\end{equation*}
	Since $D_\grpord^{-1}\vec v_*=\vec p_*$, we have $\|D_\grpord^{-1}\|_{\infty\to \infty}\ge 2\grpord$.
	Put
	\[
	\eps_*:=\frac{1}{2\grpord\|D_\grpord^{-1}\|_{\infty\to \infty}}\in \left(0,\frac{1}{(2\grpord)^2}\right].
	\]
	Thus whenever $|z|<\eps_*<1$, we have 
	\begin{equation*}
		\max_{0\le j\le 2\grpord-1}\left|p_j(z)-\frac{1}{2\grpord}\right|
		\le |z|\|D_\grpord^{-1}\|_{\infty\to \infty}
		\le \eps_* \|D_\grpord^{-1}\|_{\infty\to \infty}\le \frac{1}{2\grpord},
	\end{equation*}
	so in particular $p_j(z)\ge 0$ for all $0\le j\le 2\grpord-1$.
\end{proof}

Now we are ready to prove Proposition \ref{prop:Om2K-boundedness}.

	\begin{proof}[Proof of Proposition \ref{prop:Om2K-boundedness}]
	Let $\eps_*$ be as in Lemma \ref{lem:probability measure}.
	With a view towards applying the lemma we begin by relating $\sup |f|$ over the polytorus to $\sup |f|$ over a scaled copy.
	Recalling that the homogeneous parts $f_k$ of $f$ are trivially bounded by $f$ over the torus: $\|f_k\|_{\T^n}\le \|f\|_{\T^n}$ (a standard Cauchy estimate). Thus we have
	\begin{align}
		\label{eq:torus-dilation}
		\|f\|_{\T^n}&\leq \sum_{k=0}^d\|f_k\|_{\T^n}\nonumber\\
		&= \sum_{k=0}^d \eps_*^{-k}\sup_{z\in\T^n}|f_k(\eps_* z)|\nonumber\\
		&\leq \sum_{k=0}^d \eps_*^{-k}\sup_{z\in\T^n}|f(\eps_* z)|\nonumber\\
		&\leq (d+1)\eps_*^{-d}\sup_{z\in\T^n}|f(\eps_* z)|\nonumber\\
		&= (d+1)\eps_*^{-d}\|f\|_{(\eps_*\T)^n}\,.
	\end{align}
	
	Let $z=(z_1,\ldots, z_n)\in (\eps_* \T)^n$.
	Then for each coordinate $j=1,2,\ldots, n$ there exists by Lemma \ref{lem:probability measure} a probability distribution $\mu_j=\mu_j(z)$ on $\Om_{2\grpord}$ for which $\E_{\xi_j\sim\mu_j}[\xi_j^k]=z_j^k$ for all $0\le k\le \grpord-1$.
	With $\mu =\mu(z):=\mu_1\times\cdots\times\mu_n$, this implies for a monomial $\xi^\alpha$ with multi-index $\alpha\in \{0,1,\dots, \grpord-1\}^n$, $\E_{\xi\sim\mu(z)}[\xi^\alpha]=z^\alpha,$ or more generally by linearity $\E_{\xi\sim\mu(z)}[f(\xi)]=f(z)$ for $z\in(\eps_*\T)^n$ and $f$ under consideration.
	So
	\begin{equation}
		\label{eq:dilated-torus-comparison}
		\sup_{z\in(\eps_* \T)^n}|f(z)|= \sup_{z\in(\eps_* \T)^n}\Big|\E_{\xi\sim\mu(z)}f(\xi)\Big|\leq  \sup_{z\in(\eps_* \T)^n}\E_{\xi\sim\mu(z)}|f(\xi)|\leq\|f\|_{\Om_{2\grpord}^n}.
	\end{equation}
	Combining observations \eqref{eq:torus-dilation} and \eqref{eq:dilated-torus-comparison} we conclude
	\begin{equation*}
		\|f\|_{\T^n}\leq (d+1)\eps_*^{-d}\|f\|_{(\eps_*\T)^n}\leq (d+1)\eps_*^{-d}\|f\|_{\Om_{2\grpord}^n}
		\leq C_\grpord^d\|f\|_{\Om_{2\grpord}^n}. \qedhere
	\end{equation*}
    The last inequality follows from the fact that $\eps_*$ depends only on $K$.

\end{proof}

\subsection{Step 2}

Now we turn to Step 2's estimate,
\begin{equation}
\label{ineq:step-2}
	\|f\|_{\Om_{2\grpord}^n}\Lesssim{d,\grpord}\|f\|_{\Om_\grpord^n}\,.
\end{equation}
We will find it useful to rephrase this question as one about the boundedness at the single point
\[f(\sqrt{\omega},\ldots, \sqrt{\omega})=:f(\sqrt{\omega})\,.\]
Here and in what follows, $\omega:=\omega_\grpord=e^{2\pi \iu/\grpord}$, and $\sqrt{\omega}$ will be used as shorthand to denote the root $e^{\pi \iu/\grpord}$.
It turns out the following proposition is enough to give \eqref{ineq:step-2}.

\begin{prop}
	\label{lem:sqrtom-boundedness}
		Let $d,n\ge 1, K\ge 3$. Let $f:\T^n\to \C$ be an analytic polynomial of degree at most $d$ and individual degree at most $\grpord-1$. Then 
	\[|f(\sqrt{\om})| \Lesssim{d,\grpord} \|f\|_{\Om_\grpord^n}\,.\]
\end{prop}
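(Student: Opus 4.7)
The plan is to reduce the single-point estimate at $(\sqrt{\omega},\ldots,\sqrt{\omega})$ to a Cauchy-type bound for the homogeneous parts of $f$ on $\Omega_{\grpord}^n$, taking advantage of the fact that the all-ones point $(1,\ldots,1)$ lies in $\Omega_{\grpord}^n$.

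First I would write $f=\sum_{k=0}^d f_k$, with $f_k$ the $k$-homogeneous part. By $k$-homogeneity, $f_k(t,\ldots,t)=t^k f_k(1,\ldots,1)$ for every $t\in\C$. Since $|\sqrt{\omega}|=1$ and $(1,\ldots,1)\in\Omega_{\grpord}^n$, this immediately yields
\[
|f_k(\sqrt{\omega},\ldots,\sqrt{\omega})|=|f_k(1,\ldots,1)|\leq \|f_k\|_{\Omega_{\grpord}^n}.
\]
Summing over $k$ via the triangle inequality, it therefore suffices to establish the uniform estimate
\[
\|f_k\|_{\Omega_{\grpord}^n}\Lesssim{d,\grpord}\|f\|_{\Omega_{\grpord}^n}\qquad (k=0,1,\ldots,d),
\]
that is, that the projection $f\mapsto f_k$ onto the $k$-homogeneous part is $L^\infty(\Omega_{\grpord}^n)\to L^\infty(\Omega_{\grpord}^n)$ bounded on polynomials of total degree at most $d$, with a constant independent of $n$.

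This last estimate is exactly the Fourier-multiplier bound advertised in the introduction and developed in Sections~\ref{sec:boundedness} and~\ref{sec:aside}. Granting it, the proof concludes by a single application of the triangle inequality, costing only a factor of $d+1$ against the multiplier constant. I would be careful to invoke the multiplier bound only as it is proved in those later sections, whose proofs must not themselves rely on Theorem~\ref{thm: remez}, of which Proposition~\ref{lem:sqrtom-boundedness} is an ingredient.

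The main obstacle lies in the multiplier bound itself rather than in its application above. On the continuous polytorus, $\|f_k\|_{\T^n}\leq \|f\|_{\T^n}$ is immediate from rotation-invariance of Haar measure, but the analogous estimate on the finite grid $\Omega_{\grpord}^n$ must contend with aliasing: the rotations $z\mapsto e^{\iu\theta}z$ ordinarily used to isolate $f_k$ do not preserve $\Omega_{\grpord}^n$ for generic $\theta$. The key insight is that the total-degree restriction on $f$ precisely limits this aliasing, but making that idea quantitative (and dimension-free) is the substantive content of the paper's Fourier-multiplier machinery.
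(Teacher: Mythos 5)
Your Step 1 is a clean observation: since $f_k$ is $k$-homogeneous, $f_k(\sqrt\omega,\ldots,\sqrt\omega) = (\sqrt\omega)^k f_k(1,\ldots,1)$, and $\vec 1 \in \Om_\grpord^n$, so $|f_k(\sqrt\omega)| \le \|f_k\|_{\Om_\grpord^n}$. This is in fact the same idea underlying the paper's Property~B: group monomials so that all of them take a common value at $\sqrt\omega$, then reduce the evaluation at $\sqrt\omega$ to evaluation at $\vec 1$.

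The gap is Step 2. You assert that the bound $\|f_k\|_{\Om_\grpord^n} \Lesssim{d,\grpord} \|f\|_{\Om_\grpord^n}$ for the total-degree-homogeneous parts ``is exactly the Fourier-multiplier bound \ldots developed in Sections~\ref{sec:boundedness} and~\ref{sec:aside},'' but that is not so. Section~\ref{sec:boundedness} (Proposition~\ref{prop:Property A}) proves boundedness of the \emph{inseparable parts} $g_{(\ell,j)}$, which group monomials by support size $|\supp(\alpha)|$ and by the value of $\tau_\alpha = \prod_{j:\alpha_j\neq 0}(1-\omega^{\alpha_j})$ --- not by total degree $|\alpha|$. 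The $k$-homogeneous part $f_k$ is in general not a union of inseparable classes: inseparable monomials have degrees agreeing only mod $2\grpord$, so when $d \geq 2\grpord$ an inseparable class can straddle several total degrees, and conversely $f_k$ mixes support sizes and $\tau$-values. The paper itself only obtains $\|f_k\|_{\Om_\grpord^n}\Lesssim{d,\grpord}\|f\|_{\Om_\grpord^n}$ \emph{as a consequence} of Theorem~\ref{thm: remez} (see the remark after the theorem and the sentence preceding Corollary~\ref{cor:bounded-parts}). Since Proposition~\ref{lem:sqrtom-boundedness} is an ingredient of Theorem~\ref{thm: remez}, invoking this estimate here is circular.

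To repair the argument, replace the decomposition $f=\sum_k f_k$ by the decomposition into inseparable parts $f=\sum_{\ell,j} g_{(\ell,j)}$. Inseparable monomials also take a common value at $\sqrt\omega$ (this is the content of \eqref{eq:trig-id} and its use in Property~B), so your Step~1 observation carries over verbatim, and the required analogue of Step~2 is precisely Proposition~\ref{prop:Property A}, whose proof via the pseudoprojection $\mathfrak D$ and the inverse Vandermonde trick is genuinely independent of the Remez inequality. That is the route taken in the paper.
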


To explain why Proposition \ref{lem:sqrtom-boundedness} suffices for Step 2, let us finish the proof of Theorem \ref{thm: remez} given Proposition \ref{prop:Om2K-boundedness} and assuming Proposition \ref{lem:sqrtom-boundedness}.

\begin{proof}[Proof of Theorem \ref{thm: remez}]
	Fix a $z^*\in \argmax_{z\in\Om_{2\grpord}^n}|f(z)|$.
	Then there exist $w=(w_1,\dots, w_n)\in \Om_\grpord^n$ and $y^*\in\{1,\sqrt{\omega}\}^n$ such that
	\[w_jy^*_j = z^*_j, \quad j\in [n]\,,\]
	where $[n]:=\{1,2,\dots, n\}.$
	Define $\widetilde{f}:\T^n\to\C$ by
	\[\widetilde{f}(z) = f(w_1z_1,w_2z_2,\ldots, w_nz_n)\,.\]
	We therefore have
	\begin{align}
		\label{eq:fwidetilde-opt}
		|\widetilde{f}(y^*)|&=\|f\|_{\Om_{2\grpord}^n}\quad\text{and}\\
		\label{eq:fwidetilde-om3}\|\widetilde{f}\|_{\Om_\grpord^n}&=\|f\|_{\Om_\grpord^n}\,.
	\end{align}
	Equation \eqref{eq:fwidetilde-opt} holds
	by the definition of $y^*$, and \eqref{eq:fwidetilde-om3} holds by the group property of $\Om_\grpord$ (recall $w\in\Om_\grpord^n$).
	
	Now let $S=\{j:y^*_j=\sqrt{\omega}\}$ and $m=|S|$.
	Let $\pi:S\to[m]$ be any bijection.
	Define the ``selector'' function $s_{y^*}:\T^m\to \T^n$ coordinate-wise by
	\[\big(s_{y^*}(z)\big)_j = \begin{cases}
		y^*_j & \text{if } j\not\in S\\
		z_{\pi(j)} & \text{if } j\in S\,.
	\end{cases}\]
	Finally, define $g:\T^m\to\C$ by
	\[g(z) = \widetilde{f}(s_{y^*}(z))\,.\]
	Then we observe that $g$ is analytic with degree at most $d$ and individual degree at most $K-1$, and
	\begin{align}
		\label{eq:g-6}
		|g(\sqrt{\omega},\sqrt{\omega},\ldots, \sqrt{\omega})| = |\widetilde{f}(y^*)|\overset{\text{\eqref{eq:fwidetilde-opt}}}{=} \|f\|_{\Om_{2\grpord}^n}\\
		\label{eq:g-3}
		\|g\|_{\Om_\grpord^m}\leq \|\widetilde{f}\|_{\Om_\grpord^n}\overset{\text{\eqref{eq:fwidetilde-om3}}}{=}\|f\|_{\Om_\grpord^n}\,,
	\end{align}
	with the inequality holding because we are optimizing over a subset of points.
	From \eqref{eq:g-6} and \eqref{eq:g-3} we see Theorem \ref{thm: remez} would follow if we could prove
	\begin{equation*}
		|g(\sqrt{\omega},\sqrt{\omega},\ldots, \sqrt{\omega})|\Lesssim{d,\grpord}\|g\|_{\Om_\grpord^m}\,,
	\end{equation*}
	independent of $m\ge 1$. This is precisely Proposition \ref{lem:sqrtom-boundedness}.
\end{proof}

The proof of Proposition \ref{lem:sqrtom-boundedness} is the subject of the rest of this subsection. 
Our approach is to split $f$ into parts $f=\sum_{j}g_j$ such that each part $g_j$ has the properties A and B:

\begin{equation}\label{ineq:proof ideas}
	\|f\|_{\Om_\grpord^n}
	\overset{\text{Property A}}{\Gtrsim{d,\grpord}}
	\|g_j\|_{\Om_\grpord^n}
	\overset{\text{Property B}}{\Gtrsim{d,\grpord}}
	|g_j(\sqrt{\om})|\,.
\end{equation}

Such splitting gives
\[|f(\sqrt{\om})|\leq\sum_j|g_j(\sqrt{\om})|\Lesssim{d,\grpord}\sum_j\|g_j\|_{\Om_\grpord^n}\Lesssim{d,\grpord}\sum_j\|f\|_{\Om_\grpord^n}\,.\]
So as long as the number of $g_j$'s is independent of $n$ such a splitting with Properties A and B entails the result.

We will split $f$ via an operator that was first employed to prove the Bohnenblust--Hille inequality for cyclic groups \cite{SVZbh}.
We will only need the basic version of the operator here; a generalized version is considered in \cite{SVZbh}. Recall that any polynomial $f:\Omega_\grpord^n\to \C$ has the Fourier expansion 
\begin{equation*}
	f(z)=\sum_{\alpha\in\{0,1,\ldots, \grpord-1\}^n}a_\alpha z^\alpha.
\end{equation*}
Recall the support of a monomial $z^\alpha$ is $\supp(\alpha):=\{j:\alpha_j\neq 0\}$, and the support size $|\supp(\alpha)|$ refers to the cardinality of $\supp(\alpha)$.

\begin{defn}[Maximum support pseudoprojection]
	For any multi-index $\alpha\in\{0,1,\ldots, \grpord-1\}^n$ define the factor
	\begin{equation*}
		\tau_\alpha =\prod_{j: \alpha_j\neq 0}(1-\omega^{\alpha_j})\,.
	\end{equation*}
	For any polynomial on $\Om_\grpord^n$ with the largest support size $\ell\ge 0$
	\[f(z)=\sum_{|\supp(\alpha)|\le \ell}a_\alpha z^\alpha,\]
	we define $\mathfrak{D}f:\Om_\grpord^n\to \C$ via
	\[\mathfrak{D}f(z) =\sum_{|\supp(\alpha)|= \ell}\tau_\alpha\,a_\alpha z^\alpha\,.\]
\end{defn}

The operator $\mathfrak{D}$ can be considered as a Fourier multiplier, and this somewhat technical definition is motivated by the following key property,  the $L^\infty\to L^\infty$ boundedness when restricted to certain polynomials.

\begin{lemma}[Boundedness of maximum support pseudoprojection]
	\label{lem:d-bdd}
	Let $f:\Om_\grpord^n\to \mathbf{C}$ be a polynomial and $\ell$ be the maximum support size of monomials in $f$. Then
	\begin{equation}
		\label{ineq:main ingredient}
		\|\mathfrak{D}f\|_{\Om_\grpord^n}\le (2+2\sqrt{2})^{\ell} \|f\|_{\Om_\grpord^n}.
	\end{equation}
\end{lemma}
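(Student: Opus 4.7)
The plan is to encode $\mathfrak{D}f(z)$ as the leading Taylor coefficient of a polynomial in a single auxiliary complex parameter, and then apply a one-variable Chebyshev extremal bound. Specifically, introduce
\[
F_t(z) \;:=\; \sum_\alpha \widehat f(\alpha)\, z^\alpha \prod_{j=1}^{n} \bigl(1 + t(1-\omega^{\alpha_j})\bigr), \qquad t \in \C.
\]
Because each bracket equals $1$ whenever $\alpha_j = 0$ and $f$ has maximum support $\ell$, the map $t \mapsto F_t(z)$ is a polynomial of degree at most $\ell$ for every fixed $z$. Moreover, only multi-indices with $|\supp(\alpha)|=\ell$ contribute to the coefficient of $t^\ell$, each weighted by $\prod_{j\in\supp(\alpha)}(1-\omega^{\alpha_j})=\tau_\alpha$, so the $t^\ell$-coefficient of $F_t(z)$ is exactly $\mathfrak{D}f(z)$.

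Rewriting each bracket as $(1+t) - t\omega^{\alpha_j}$ and expanding the product over $j \in [n]$ yields the averaging identity
\[
F_t(z) \;=\; \sum_{S \subseteq [n]}(1+t)^{n-|S|}(-t)^{|S|}\, f(R_S z),
\]
where $R_S z$ denotes the point obtained from $z$ by replacing $z_j$ with $\omega z_j$ for each $j \in S$. For real $t \in [-1,0]$ both $(1+t)$ and $(-t)$ are nonnegative, and by the binomial theorem the weights $(1+t)^{n-|S|}(-t)^{|S|}$ sum to $((1+t)+(-t))^n=1$. Thus $F_t(z)$ is a convex combination of values of $f$ on $\Omega_K^n$, giving the uniform estimate $|F_t(z)| \le \|f\|_{\Omega_K^n}$ for every $z \in \Omega_K^n$ and every $t \in [-1,0]$.

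To conclude, regard $t \mapsto F_t(z)$ as a polynomial of degree at most $\ell$ with complex coefficients, bounded in absolute value by $\|f\|_{\Omega_K^n}$ on $[-1,0]$. The classical Chebyshev extremal theorem (extended to complex coefficients by applying the real case to the real part of any monic polynomial) then bounds its $t^\ell$-coefficient by $2^{2\ell-1}\|f\|_{\Omega_K^n}$ in absolute value; the extremal polynomial is $T_\ell(2t+1)$, the affine shift of the classical Chebyshev polynomial from $[-1,1]$ to $[-1,0]$. A direct check shows $2^{2\ell-1} \le (2+2\sqrt{2})^\ell$ for every $\ell \ge 1$, so the desired inequality \eqref{ineq:main ingredient} follows. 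The main conceptual step is spotting the auxiliary polynomial $F_t$; after that the problem reduces cleanly to standard one-variable extremal polynomial theory, with the $n$-dependence being absorbed by the convex-combination structure of the averaging identity.
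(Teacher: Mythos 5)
Your proof is correct, and it takes a genuinely different route from the one in the paper. The paper proves Lemma~\ref{lem:d-bdd} by precomposing $f$ with the affine substitution $z_j = \tfrac{1+\omega}{2} + \tfrac{1-\omega}{2}x_j$ to produce a function $G(f)$ on $\Om_2^n$, then invoking the projection bound $\|g_m\|_{\Om_2^n}\le (1+\sqrt 2)^m\|g\|_{\Om_2^n}$ of Defant--Masty\l o--P\'erez (which in turn rests on Klimek's plurisubharmonic extremal theory), and finally translating back. You instead package $\mathfrak{D}f(z)$ as the leading $t^\ell$-coefficient of the one-parameter family $F_t(z)=\sum_\alpha \widehat f(\alpha)z^\alpha\prod_j\bigl((1+t)-t\omega^{\alpha_j}\bigr)$, observe that the multiplier expands into the averaging identity $F_t(z)=\sum_{S\subseteq[n]}(1+t)^{n-|S|}(-t)^{|S|}f(R_Sz)$, and note that this is a genuine convex combination of group translates of $f$ for $t\in[-1,0]$, so $\|F_t\|_{\Om_K^n}\le\|f\|_{\Om_K^n}$ on this interval. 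The dimension-freeness is absorbed at this step, and the rest is just the classical Chebyshev coefficient estimate on the unit-length interval $[-1,0]$. Your argument is more self-contained (no appeal to DMP/Klimek) and the resulting constant $2^{2\ell-1}=\tfrac12\cdot 4^\ell$ is slightly sharper than the paper's $(2+2\sqrt 2)^\ell=(2(1+\sqrt 2))^\ell\approx 4.83^\ell$. Two small remarks: the Chebyshev bound as you state it applies for $\ell\ge 1$, but the $\ell=0$ case of the lemma is trivial (the bound required there is $1$, and $(2+2\sqrt 2)^0=1$); and the complex-coefficient reduction you sketch (rotate so the leading coefficient is real and positive, then pass to the real part) is exactly the right fix and is standard. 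Both proofs hinge on the same structural observation --- that the Fourier multiplier $\alpha\mapsto\prod_j(1-\omega^{\alpha_j})$ tensorizes coordinate-wise so one can isolate it via an affine one-parameter deformation --- but yours realizes this with a single complex parameter and a one-variable extremal problem rather than an $n$-variable Boolean-cube projection.
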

The proof of Lemma \ref{lem:d-bdd} is given in \cite{SVZbh}.
We repeat it here in a slightly simplified form for convenience.

\begin{proof}
	Let $\omega=e^{\frac{2\pi \iu }{K}}$. Consider the operator $G$:
	\begin{equation*}
		G(f)(x)
		=f\left(\frac{1+\omega}{2}+\frac{1-\omega}{2}x_1,\dots, \frac{1+\omega}{2}+\frac{1-\omega}{2}x_n\right),\qquad x\in\Om_2^n
	\end{equation*}
	that maps any function $f:\{1,\omega\}^n\subset \Om_\grpord^n\to \mathbf{C}$ to a function $G(f):\Om_2^n\to \mathbf{C}$.
	Then by definition
	\begin{equation}\label{ineq:G compare K prime}
		\|f\|_{\Om_\grpord^n}\ge 	\|f\|_{\{1,\omega\}^n}=\|G(f)\|_{\Om_2^n}. 
	\end{equation}
	Fix $m\le \ell$. For any $\alpha$ we denote 
	\begin{equation*}
		m_k(\alpha):=|\{j:\alpha_j=k\}|,\qquad 0\le k\le \grpord-1.
	\end{equation*}
	Then for $\alpha$ with $|\supp(\alpha)|=m$, we have
	\begin{equation*}
		m_1(\alpha)+\cdots+m_{\grpord-1}(\alpha)=|\supp(\alpha)|=m.
	\end{equation*}
	For $z\in \{1,\omega\}^n$ with $z_j=\frac{1+\omega}{2}+\frac{1-\omega}{2}x_j, x_j=\pm 1$, note that 
	\begin{equation*}
		z_j^{\alpha_j}=\left(\frac{1+\omega}{2}+\frac{1-\omega}{2}x_j \right)^{\alpha_j}=\frac{1+\omega^{\alpha_j}}{2}+\frac{1-\omega^{\alpha_j}}{2}x_j\,.
	\end{equation*}
	So for any $A\subset [n]$ with $|A|=m$, and for each $\alpha$ with $\supp(\alpha)=A$, we have for $z\in \{1,\omega\}^n$:
	\begin{align*}
		z^{\alpha}
		=&\prod_{j:\alpha_j\neq 0}z_j^{\alpha_j}\\
		=&\prod_{j:\alpha_j\neq 0}\left(\frac{1+\omega^{\alpha_j}}{2}+\frac{1-\omega^{\alpha_j}}{2}x_j\right)\\
		=&\prod_{j:\alpha_j\neq 0}\left(\frac{1-\omega^{\alpha_j}}{2}\right)\cdot x^A+\cdots\\
		=&2^{-m}\tau_\alpha x^A+\cdots
	\end{align*}
	where $x^A:=\prod_{j\in A}x_j$ is of degree $|A|=m$ while $\cdots$ is of degree $<m$. Then for $f(z)=\sum_{|\supp(\alpha)|\le \ell}a_{\alpha}z^\alpha$ we have
	\begin{align*}
		G(f)(x)
		=\sum_{m\le \ell}\frac{1}{2^m}\sum_{|A|=m}\left(\sum_{ \supp(\alpha)=A} \tau_\alpha a_{\alpha}\right)
		x^A+\cdots,\qquad x\in \Om_2^n.
	\end{align*}
	Again, for each $m\le \ell$, $\cdots$ is some polynomial of degree $<m$. So $G(f)$ is of degree $\le \ell$ and the $\ell$-homogeneous part is nothing but 
	\begin{equation*}
		\frac{1}{2^\ell}\sum_{|A|=\ell}\left(\sum_{ \supp(\alpha)=A} \tau_\alpha a_{\alpha}\right)
		x^A.
	\end{equation*}
	Consider the projection operator $Q$ that maps any polynomial on $\Om_2^n$ onto its highest level homogeneous part; \emph{i.e.}, for any polynomial $g:\Om_2^n\to \mathbf{C}$ with $\deg(g)=m$ we denote $Q(g)$ its $m$-homogeneous part. Then we just showed that 
	\begin{align}
		\label{eq:expl-form}
		Q(G(f))(x)	
		=\frac{1}{2^\ell}\sum_{|A|=\ell}\left(\sum_{ \supp(\alpha)=A} \tau_\alpha a_{\alpha}\right)
		x^A.
	\end{align}

	It is known that \cite[Lemma 1 (\textit{iv})]{DMP} for any polynomial $g:\Om^n_2\to \mathbf{C}$ of degree at most $d>0$ and $g_m$ its $m$-homogeneous part, $m\leq d$, we have the estimate
	\begin{equation*}
		\|g_m\|_{\Om_2^n}\le (1+\sqrt{2})^d \|g\|_{\Om_2^n}.
	\end{equation*}
	Applying this estimate to $G(f)$ and combining the result with \eqref{ineq:G compare K prime}, we have
\begin{equation*}
	\|Q(G(f))\|_{\Om_2^n}\le (\sqrt{2}+1)^{\ell}\|G(f)\|_{\Om_2^n}
	\le (1+\sqrt{2})^{\ell} \|f\|_{\Om_\grpord^n}
\end{equation*}
and thus by \eqref{eq:expl-form}
\begin{align*}
	\left\| \sum_{|A|=\ell}\left(\sum_{ \supp(\alpha)=A} \tau_\alpha  a_{\alpha}\right)
	x^A\right\|_{\Om_2^n}
	\le(2+2\sqrt{2})^{\ell}\|f\|_{\Om_\grpord^n}.
\end{align*}

The function on the left-hand side is almost $\mathfrak{D}f$.
Observe that $\Om_\grpord^n$ is a group, so we have 
\begin{equation*}
	\sup_{z,\xi\in\Om_\grpord^n}\left|\sum_{\alpha}a_{\alpha}z^{\alpha}\xi^{\alpha}\right|
	=\sup_{z\in\Om_\grpord^n}\left|\sum_{\alpha}a_{\alpha}z^{\alpha}\right|.
\end{equation*}
Thus we have actually shown
\begin{equation*}
	\sup_{z\in\Om_\grpord^n,x\in \Om_2^n}\left| \sum_{|A|=\ell}\left(\sum_{ \supp(\alpha)=A} \tau_\alpha a_{\alpha}z^{\alpha}\right)
	x^A\right|	\le (2+2\sqrt{2})^{\ell} \|f\|_{\Om_\grpord^n}.
\end{equation*}
Setting $x=\vec1$ gives \eqref{ineq:main ingredient}.
\end{proof}

Note that $\mathfrak{D}f$ is exactly the part of $f$ composed of monomials of maximum support size, except where the coefficients $a_\alpha$ have picked up the factor $\tau_\alpha$.
The relationships among the $\tau_\alpha$'s can be intricate: while in general they are different for distinct $\alpha$'s, this is not always true.
Consider the case of $\grpord=3$ and the two monomials
\begin{equation*}
z^{\beta}:=z_1^2 z_2z_3z_4z_5z_6z_7z_8,\quad z^{\beta'}:=z_1^2 z_2^2 z_3^2 z_4^2 z_5^2 z_6^2 z_7^2z_8\,.
\end{equation*}
Then
\[
\tau_{\beta}\;=\;(1-\omega)^7(1-\omega^2)\;=\;(1-\omega)(1-\omega^2)^7\;=\;\tau_{\beta'},
\]
which follows from the identity $(1-\omega)^6=(1-\omega^2)^6$ for $\omega=e^{2\pi \iu/3}$.

Understanding precisely when $\tau_\alpha = \tau_\beta$ seems to be a formidable task in transcendental number theory.
When $K$ is prime there is a relatively simple characterization (see Section \ref{sec:aside}) but for composite $K$ the situation is much less clear.
Nevertheless, it turns out that for the purposes of Theorem \ref{thm: remez} we do not need a full understanding.
Indeed, our $g_j$'s shall be defined according to the $\tau$'s.

\begin{defn}
Two monomials $z^\alpha,z^\beta$ with associated multi-indexes $\alpha,\beta\in \{0,1\dots, K-1\}^n$ are called \emph{inseparable} if $|\mathrm{supp}(\alpha)|=|\mathrm{supp}(\beta)|$ and $\tau_\alpha=\tau_\beta$.
When $m$ and $m'$ are inseparable, we write $m\sim m'$.

Inseparability is an equivalence relation among monomials. We may split any polynomial $f$ into parts $f=\sum_j g_j$ according to this relation. That is, any two monomials in $f$ are inseparable if and only if they belong to the same $g_j$. 
Call these $g_j$'s the \emph{inseparable parts} of $f$.
\end{defn}

It is these inseparable parts that are our $g_j$'s in \eqref{ineq:proof ideas}.
We shall formally check it later, but it is easy to see the number of inseparable parts is independent of $n$.
We formulate and prove Properties A \& B next.

\subsubsection{Property A: Boundedness of inseparable parts}
\label{sec:boundedness}

Repeated applications of the operator $\mathfrak{D}$ enable splitting into inseparable parts.

\begin{prop}[Property A]\label{prop:Property A}
Fix $\grpord\ge 3$ and $d\ge 1$. Suppose that $f:\Om_\grpord^n\to \mathbf{C}$ is a polynomial of degree at most $d$ with maximum support size $L$. For $0\leq\ell\leq L$ let $f_\ell$ denote the part of $f$ composed of monomials of support size $\ell$, and let $g_{(\ell, 1)},\ldots, g_{(\ell, J_\ell)}$ be the inseparable parts of $f_\ell$.
Then there exists a universal constant $C_{d,\grpord}$ independent of $n$ and $f$ such that for all $0\leq \ell\leq L$ and $1\leq j\leq J_\ell$,
\[\|g_{(\ell, j)}\|_{\Om_\grpord^n}\leq C_{d,\grpord}\|f\|_{\Om_\grpord^n}\,.\]
\end{prop}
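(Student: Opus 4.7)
The plan is to prove Proposition \ref{prop:Property A} by induction on $L$, the maximum support size of $f$, using iterated applications of $\mathfrak{D}$ together with a Vandermonde inversion to peel off top-level inseparable parts one support-level at a time. The base case $L=0$ is immediate: $f$ is constant and is its own unique inseparable part. For the inductive step, the key observation is that $\mathfrak{D}$ annihilates all monomials of support size strictly less than $L$ and multiplies each support-$L$ monomial $z^\alpha$ by $\tau_\alpha$, so iterating preserves the support-$L$ structure while scaling each $g_{(L,j)}$ by a common factor. Letting $\sigma_1,\dots,\sigma_{J_L}$ enumerate the distinct values of $\tau_\alpha$ attained by the support-$L$ monomials of $f$, one obtains
\[\mathfrak{D}^k f \;=\; \sum_{j=1}^{J_L} \sigma_j^k\, g_{(L,j)} \qquad (k\ge 1).\]

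By the definition of inseparability the $\sigma_j$ are distinct, and each is nonzero (being a product of factors $1-\omega^{\alpha_i}$ with $\alpha_i\in\{1,\dots,\grpord-1\}$), so the $J_L\times J_L$ matrix $M=(\sigma_j^k)_{k,j=1}^{J_L}$, with $\det M=\sigma_1\cdots\sigma_{J_L}\prod_{i<j}(\sigma_j-\sigma_i)\neq 0$, is invertible. Solving displays each top-level part as $g_{(L,j)}=\sum_{k=1}^{J_L}(M^{-1})_{jk}\,\mathfrak{D}^k f$, and iterating Lemma \ref{lem:d-bdd} yields $\|\mathfrak{D}^k f\|_{\Om_\grpord^n}\le(2+2\sqrt{2})^{kL}\|f\|_{\Om_\grpord^n}$. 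Combining these two ingredients gives $\|g_{(L,j)}\|_{\Om_\grpord^n}\le C'_{d,\grpord}\|f\|_{\Om_\grpord^n}$ for a constant depending only on $d$ and $\grpord$. At this point $f_L=\sum_j g_{(L,j)}$ is controlled as well, so $f-f_L$ has maximum support size at most $L-1$ and norm bounded by a fixed multiple of $\|f\|_{\Om_\grpord^n}$; the inductive hypothesis applied to $f-f_L$ then dispatches every remaining $g_{(\ell,j)}$ with $\ell<L$.

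The main obstacle is showing that the Vandermonde inversion constants remain bounded independently of $n$. This reduces to uniform control on (i) the cardinality $J_L$, (ii) the minimum modulus $\min_j|\sigma_j|$, and (iii) the minimum separation $\min_{i\neq j}|\sigma_i-\sigma_j|$. All three are automatic because each $\tau_\alpha$ depends only on the multiset of nonzero entries of $\alpha$, and for $|\supp(\alpha)|=L\le d$ the set of such multisets is a fixed finite subset of size at most $\binom{d+\grpord-2}{d}$ drawn from $\{1,\dots,\grpord-1\}$; hence the attainable $\sigma$-values form a fixed finite subset of $\mathbf{C}\setminus\{0\}$ determined by $d$ and $\grpord$ alone, and its minimum modulus and pairwise separations inherit the same dependence. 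Since the induction runs at most $d+1$ steps and at each step the constant grows by a controlled multiplicative factor $(1+J_L C'_{d,\grpord})$, the accumulated final constant $C_{d,\grpord}$ depends only on $d$ and $\grpord$, as required.
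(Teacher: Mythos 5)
Your proposal is correct and follows essentially the same approach as the paper: iterate $\mathfrak{D}$ to set up a Vandermonde system among the inseparable parts of maximal support size, invert it using the distinctness and nonvanishing of the $\tau$-values (whose range is a finite set determined by $d$ and $K$ alone), then recurse on $f-f_L$. The paper presents the recursion by unwinding rather than by formal induction and is terser about why the inverse-Vandermonde constants are dimension-free, but the ideas and estimates are identical.
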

\begin{proof}
We first show the proposition for $g_{(L,j)}$, $1\leq j \leq J_L$. Suppose that 
\begin{equation*}
	f(z)=\sum_{\alpha:|\supp(\alpha)|\le L}a_{\alpha}z^{\alpha}.
\end{equation*}
Inductively, one obtains from Lemma \ref{lem:d-bdd} that for $1\leq k\leq J_L$,
\begin{align}
	\label{ineq:c(alpha)^k}
	\begin{split}
		\mathfrak{D}^{k}f&= \sum_{ |\supp(\alpha)|=L} \tau_\alpha^ka_{\alpha} z^{\alpha}
		\\
		\text{with}\qquad\left\| \mathfrak{D}^{k}f\right\|_{\Om_\grpord^n}&\le (2+2\sqrt{2})^{kL}\|f\|_{\Om_\grpord^n}.
	\end{split}
\end{align}
By definition there are $J_L$ distinct values of $\tau_\alpha$ among 
the monomials of $f_L$; label them $c_1,\ldots, c_{J_L}$.
Then
\begin{equation*}
	f_{L}(z)=\sum_{ |\supp(\alpha)|=L} a_{\alpha} z^{\alpha}=\sum_{1\le j\le J_L}g_{(L,j)}(z), \quad\text{and}
\end{equation*}
\begin{equation*}
	\mathfrak{D}^{k}f(z)=\sum_{ |\supp(\alpha)|=L} \tau_\alpha^k a_{\alpha}z^{\alpha}=\sum_{1\le j\le J_L}c_j^k g_{(L,j)}(z),\qquad k\ge 1.
\end{equation*}
Let us confirm $J_L$ is independent of $n$.
Consider $\alpha$ with $|\supp(\alpha)|=L$.
We may count the support size of $\alpha$ by binning coordinates according to their degree:
$|\supp(\alpha)|=L$,
\[\sum_{1\leq t\leq \grpord-1}|\{s\in[n]:\alpha_s=t\}|=L\leq d,\]
so 
\begin{align}
\label{ineq:J_L}
	\begin{split}
	J_L &\leq |\{(m_1,\dots, m_{\grpord-1})\in \{0,\ldots, L\}^{\grpord-1}:m_1+\cdots  +m_{\grpord-1}= L\}|\\
	 &\le  \binom{\grpord-1+L-1}{L-1}\leq (\grpord+d)^d\,.
	\end{split}
\end{align}

According to \eqref{ineq:c(alpha)^k}, we have

\begin{equation*}
	\begin{pmatrix}
		\mathfrak{D}f\\
		\mathfrak{D}^{2}\!f\\
		\vdots \\
		\mathfrak{D}^{J_L}\!f\\
	\end{pmatrix}
	=\underbracedmatrix{\begin{matrix}
			c_1 & c_2 & \cdots & c_{J_L}\\
			c_1^2 & c_2^2 &\cdots & c_{J_L}^2\\
			\vdots & \vdots & \ddots & \vdots\\
			c_1^{J_L} & c_2^{J_L} & \cdots & c_{J_L}^{J_L}
	\end{matrix}}{=:\; V_L}
	\begin{pmatrix}
		g_{(L,1)}\\
		g_{(L,2)}\\
		\vdots \\
		g_{(L,J_L)}\\
	\end{pmatrix}.
\end{equation*}

The $J_L\times J_L$ modified Vandermonde matrix $V_L$ has determinant
\[\det(V_L) = \left(\prod_{j=1}^{J_L} c_j\right) \left( \prod_{1\leq s < t \leq J_L}(c_s-c_t)\right).\]
Since the $c_j$'s are distinct and nonzero we have $\det(V_L)\neq 0$. So $V_L$ is invertible and in particular $g_{(L,j)}$ is the $j$\textsuperscript{th} entry of $V_L^{-1}(\mathfrak{D}^1\!f,\ldots,\mathfrak{D}^{J_L}\!f)^\top$.
Letting $\eta^{(L,j)}=(\eta^{(L,j)}_k)_{1\le k\le J_L}$ be the $j$\textsuperscript{th} row of $V_L^{-1}$, this means
\begin{align*}
	g_{(L,j)}&=\sum_{1\le k\le J_L}\eta_k^{(L,j)}\, \mathfrak{D}^{k}\!f.
\end{align*}
As $\eta^{(L,j)}$ depends on $d$ and $\grpord$ only, so for all $1\leq j\leq J_L$,
\begin{equation}
	\label{eq:top-supp-bd}
	\|g_{(L,j)}\|_{\Om_\grpord^n}\le \sum_{1\le k\le J_L} \big|\eta_k^{(L,j)}\big|\!\cdot\!	\left\|\mathfrak{D}^{k}\!f\right\|_{\Om_\grpord^n}
	\le \|\eta^{(L,j)}\|_1\big(2+2\sqrt{2}\big)^{J_Ld}	\|f\|_{\Om_\grpord^n},
\end{equation}
where we used \eqref{ineq:c(alpha)^k} in the last inequality. The constant \[\|\eta^{(L,j)}\|_1(2+2\sqrt{2})^{J_Ld}\leq C(d,\grpord)<\infty\]
for appropriate $C(d,\grpord)$ that is dimension-free and depends only on $d$ and $\grpord$ only.
This finishes the proof for the inseparable parts in $f_L$.

We now repeat the argument on $f-f_L$ to obtain \eqref{eq:top-supp-bd} for the inseparable parts of support size $L-1$.
In particular, there are vectors $\eta^{(L-1,j)}$, $1\leq j\leq J_{L-1}$ of dimension-free 1-norm with
\[\|g_{(L-1,j)}\|_{\Om_\grpord^n}\leq C(d,\grpord)\|\eta^{(L-1,j)}\|_1\|f-f_L\|_{\Om_\grpord^n}
\Lesssim{d,\grpord}\|f-f_L\|_{\Om_\grpord^n}\,.\]
This can be further repeated to obtain for $0\leq \ell\leq L$ and $1\leq j\leq J_\ell$, the vectors $\eta^{(\ell,j)}$ with dimension-free 1-norm such that
\[\|g_{(\ell, j)}\|_{\Om_\grpord^n}\Lesssim{d,\grpord}\left\|f-\sum_{\ell+1\leq k\leq L}f_k\right\|_{\Om_\grpord^n}\,.\]

It remains to relate $\|f-\sum_{\ell+1\leq k \leq L}f_k\|_{\Om_\grpord^n}$ to $\|f\|_{\Om_\grpord^n}$.
Note that with $V_L$ as originally defined, by considering $(1\,1\,\dots\,1)V_L^{-1}(\mathfrak{D}^1f,\ldots, \mathfrak{D}^{J_L}f)^\top$ we obtain a constant $D_L=D_L(d,\grpord)$ independent of $n$ for which
\[\|f_L\|_{\Om_\grpord^n}\leq D_L\|f\|_{\Om_\grpord^n}\,.\]
This means
\[\|f-f_L\|_{\Om_\grpord^n}\leq (1+D_L)\|f\|_{\Om_\grpord^n}\,.\]
Notice the top-support part of $f-f_L$ is exactly $f_{L-1}$, so repeating the argument above on $f-f_L$ yields a constant $D_{L-1}=D_{L-1}(d,\grpord)$ such that
\begin{equation*}
    \|f_{L-1}\|_{\Om_\grpord^n}\leq D_{L-1}\|f-f_L\|_{\Om_\grpord^n}
\le D_{L-1}(1+D_L)\|f\|_{\Om_\grpord^n}=(D_{L-1}+D_{L-1}D_L)\|f\|_{\Om_\grpord^n}\,.
\end{equation*}
        Continuing, for $1\leq \ell \leq L$ we find 
\begin{align*}
	\|f_{L-\ell}\|_{\Om_\grpord^n}
	&\leq D_{L-\ell}\|f-\sum_{L-\ell+1\le k\le L}f_k\|_{\Om_\grpord^n}\\
	&\leq D_{L-\ell}(1+D_{L-\ell+1})\|f-\sum_{L-\ell+2\le k\le  L}f_k\|_{\Om_\grpord^n}\\
	&\leq D_{L-\ell}\prod_{0\le k\le \ell-1}(1+D_{L-k})\|f\|_{\Om_\grpord^n}.
\end{align*}
We have found for each $\ell$-support-homogeneous part of $f$,
\[\|f_\ell\|_{\Om_\grpord^n}\Lesssim{d,\grpord}\|f\|_{\Om_\grpord^n},\]
so we have $\|f-\sum_{\ell+1\leq k \leq L}f_k\|_{\Om_\grpord^n}\Lesssim{d,\grpord}\|f\|_{\Om_\grpord^n}$ as well.
\end{proof}

\subsubsection{Property B: Boundedness at \texorpdfstring{$\sqrt{\omega}$}{√ω} for inseparable parts}

Here we argue $g(\sqrt{\omega})$ is bounded for inseparable $g$. Recall that $\omega=e^{\frac{2\pi \iu}{K}}$ and $\sqrt{\omega}=e^{\frac{\pi \iu}{K}}$.
\begin{prop}[Property B]
\label{prop:Property B}
If $g$ is inseparable then $|g(\sqrt{\omega})|\leq\|g\|_{\Om_\grpord^n}$.
\end{prop}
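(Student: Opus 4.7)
The approach will exploit two things: the specific multiplicative structure of $\tau_\alpha$, and the fact that $\sqrt{\om}=e^{\pi\iu/K}$ is essentially a square root of $\om$. The first step is to record the trigonometric factorization
\[
1 - \om^{\alpha_j} \;=\; 1-e^{2\pi\iu\alpha_j/K} \;=\; -2\iu\, e^{\pi\iu\alpha_j/K}\sin(\pi\alpha_j/K) \;=\; -2\iu\,(\sqrt{\om})^{\alpha_j}\sin(\pi\alpha_j/K).
\]
Multiplying this over $j\in\supp(\alpha)$, with $\ell:=|\supp(\alpha)|$, I obtain the identity
\[
\tau_\alpha \;=\; (-2\iu)^\ell\,(\sqrt{\om})^{|\alpha|}\prod_{j:\,\alpha_j\neq 0}\sin(\pi\alpha_j/K).
\]

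Next I will apply the inseparability hypothesis. By definition every monomial $z^\alpha$ appearing in $g$ has a common support size $\ell$ and a common value $\tau_\alpha=c$. Since $\alpha_j\in\{1,\dots,K-1\}$ when $\alpha_j\neq 0$, each factor $\sin(\pi\alpha_j/K)$ is a positive real number, so the whole product $\prod_{j:\alpha_j\neq 0}\sin(\pi\alpha_j/K)$ is a positive real. Taking absolute values in the identity above shows that this product equals the \emph{common} positive real $s:=|c|/2^\ell$ for every $\alpha$ indexing a monomial of $g$. Substituting this back into the identity forces $(\sqrt{\om})^{|\alpha|}$ itself to equal the common unimodular constant
\[
w \;:=\; \frac{c}{(-2\iu)^\ell s}
\]
for every $\alpha$ indexing $g$---that is, the quantity $(\sqrt{\om})^{|\alpha|}$ is constant on the inseparable class, even though $|\alpha|$ itself need not be.

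With this in hand the evaluation is immediate. Writing $g(z)=\sum_\alpha a_\alpha z^\alpha$, we get
\[
g(\sqrt{\om},\dots,\sqrt{\om}) \;=\; \sum_\alpha a_\alpha (\sqrt{\om})^{|\alpha|} \;=\; w\sum_\alpha a_\alpha \;=\; w\cdot g(\vec 1),
\]
and since $|w|=1$ and $\vec 1\in\Om_\grpord^n$, this yields $|g(\sqrt{\om})|=|g(\vec 1)|\le\|g\|_{\Om_\grpord^n}$, as required. I do not anticipate any genuine obstacle here: all of the work is front-loaded into the trigonometric factorization of $\tau_\alpha$ and into the definition of inseparability, which have been engineered precisely so that the exponent $|\alpha|\bmod 2K$ is pinned down by $\tau_\alpha$ and $|c|$; once that rigidity is extracted, the inequality degenerates to the trivial bound $|g(\vec 1)|\le \|g\|_{\Om_\grpord^n}$.
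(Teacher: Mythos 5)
Your proof is correct and follows essentially the same approach as the paper's. The paper derives the identity $(\sqrt{\omega})^k = \iu\,(1-\omega^k)/|1-\omega^k|$ (from the orthogonality of $(\sqrt{\omega})^k$ and $1-\omega^k$ in $\C$) and uses it to show that inseparable monomials $m\sim m'$ satisfy $m(\sqrt{\omega})=m'(\sqrt{\omega})$, then concludes via $|g(\sqrt{\omega})|=|g(\vec{1})|\le\|g\|_{\Om_\grpord^n}$; your explicit trigonometric factorization $1-\omega^{\alpha_j}=-2\iu(\sqrt{\omega})^{\alpha_j}\sin(\pi\alpha_j/K)$ is just an algebraic rewriting of the same normalization step, arriving at the same rigidity conclusion that $(\sqrt{\omega})^{|\alpha|}$ is constant over the inseparable class.
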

\begin{proof}
We will need an identity for half-roots of unity.
For $k=1,\ldots, \grpord-1$ we have
\begin{equation}
	\label{eq:trig-id}
	(\sqrt{\omega})^k=\iu\frac{1-\omega^k}{|1-\omega^k|}\,,
\end{equation}
following from the orthogonality of $(\sqrt{\omega})^k$ and $1-\omega^k$ in the complex plane.

We claim that for two monomials $m$ and $m'$
\begin{equation*}
	m\sim m' \;\implies\; m(\sqrt{\om})=m'(\sqrt{\om})\,.
\end{equation*}
By definition $m\sim m'$ means $m$ and $m'$ have the same support size (call it $\ell$) and
\[\textstyle\prod_{j:\alpha_j\neq0}(1-\omega^{\alpha_j}) = \prod_{j:\beta_j\neq0}(1-\omega^{\beta_j})\,.\]
Dividing both sides by the modulus and multiplying by $\iu^\ell$ allows us to apply \eqref{eq:trig-id} to find
\[\textstyle\prod_{j:\alpha_j\neq0}(\sqrt{\omega})^{\alpha_j}=\prod_{j:\beta_j\neq0}(\sqrt{\omega})^{\beta_j}\,,\]
as desired.

Now let $\zeta=m(\sqrt{\omega})\in \T$ for some monomial $m$ in $g$.
Then because $\zeta$ is independent of $m$, with $g=\sum_{\alpha\in S}a_\alpha z^\alpha$, we have $g(\sqrt{\omega})=\zeta\sum_{\alpha\in S}a_\alpha$ and
\begin{equation*}
	\textstyle
	|g(\sqrt{\omega})| =|\sum_{\alpha\in S}a_\alpha| = |g(\vec1)|\leq \|g\|_{\Om_\grpord^n}\,.\qedhere
\end{equation*}
\end{proof}

We may now prove Proposition \ref{lem:sqrtom-boundedness}.
\begin{proof}[Proof of Proposition \ref{lem:sqrtom-boundedness}.]
Write $f=\sum_{0\leq \ell \leq L}\sum_{1\leq j\leq J_\ell}g_{(\ell,j)}$ in terms of inseparable parts, where $g_{(\ell,j)},1\le j\le J_\ell,0\le \ell \le L$ are as in Proposition \ref{prop:Property A}. Then by Propositions \ref{prop:Property A} (Property A) and \ref{prop:Property B} (Property B)
\begin{align*}
	|f(\sqrt{\omega})|\;&\leq\quad\sum_{0\leq \ell \leq L}\sum_{1\leq j\leq J_\ell}|g_{(\ell,j)}(\sqrt{\omega})|\\
	&\leq\quad\sum_{0\leq \ell \leq L}\sum_{1\leq j\leq J_\ell}\|g_{(\ell,j)}\|_{\Om_\grpord^n}\tag{Property B}\\
	&\Lesssim{d,\grpord}\|f\|_{\Om_\grpord^n}\sum_{0\leq \ell \leq L}J_\ell\tag{Property A}\,.
\end{align*}
In view of \eqref{ineq:J_L} and $L\le d$, we obtain $|f(\sqrt{\omega})|\Lesssim{d,\grpord}\|f\|_{\Om_\grpord^n}$.
\end{proof}

\subsection{Aside: characterizing inseparable parts for prime \texorpdfstring{$\grpord$}{K}}
\label{sec:aside}

Although it is not required for the proof of Theorem \ref{thm: remez}, it is interesting to understand what are the parts $g$ of $f$ for which
\begin{equation}
	\label{ineq:prime-proj}
	\|g\|_{\Om_\grpord^n}\Lesssim{d,\grpord}\|f\|_{\Om_\grpord^n}
\end{equation}
via our Property A (Proposition \ref{prop:Property A})?
Recall that \eqref{ineq:prime-proj} holds when $g$ is a part of $f$ containing all monomials in $f$ from an equivalence class of the inseparability equivalence relation $\sim$.

Thus we are led to ask for a characterization of inseparability.
It turns out that for prime $\grpord$ this can be done completely via connections to transcendental number theory including Baker's theorem \cite{Baker}.

\begin{prop}
	Suppose $\grpord\ge 3$ is prime and $\alpha,\beta\in{\{0,1,\ldots, K-1\}}^n$.
	Then two monomials $z^\alpha$, $z^\beta$ are inseparable if and only if 
	\begin{itemize}
		\item Support sizes are equal: $|\supp(\alpha)|=|\supp(\beta)|$,
		\item Degrees are equal mod $2\grpord$: $|\alpha|=|\beta| \mod 2\grpord$,
		\item Individual degree symmetry: there is a bijection $\pi:\supp(\alpha)\to\supp(\beta)$ such that for all $j\in\supp(\alpha)$, $\alpha_j=\beta_{\pi(j)}$ or $\alpha_j = \grpord-\beta_{\pi(j)}$.
	\end{itemize}
\end{prop}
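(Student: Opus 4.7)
The proof naturally splits into the reverse and forward implications, with the forward deduction of condition 3 requiring substantial number-theoretic input.

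For the reverse direction, suppose conditions 1--3 hold, and let $S := \{j \in \supp(\alpha) : \alpha_j = K - \beta_{\pi(j)}\}$ be the set of ``flipped'' indices. Using the identity $1 - \omega^{K-k} = -\omega^{-k}(1-\omega^k)$, a direct computation yields
\[\tau_\alpha \;=\; \tau_\beta \cdot (-1)^{|S|}\omega^{-\sum_{j \in S}\beta_{\pi(j)}}.\]
Because $|\alpha| - |\beta| = K|S| - 2\sum_{j \in S}\beta_{\pi(j)}$, condition 2 is equivalent to $(-1)^{|S|}\omega^{-\sum_{j \in S}\beta_{\pi(j)}} = e^{\iu\pi(|\alpha|-|\beta|)/K} = 1$. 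Together with the equal support sizes this gives $\tau_\alpha = \tau_\beta$ and hence inseparability.

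For the forward direction, assume $z^\alpha, z^\beta$ are inseparable. Exploiting $1 - \omega^k = -2\iu\sin(\pi k/K)(\sqrt{\omega})^k$ for $k \in \{1,\ldots, K-1\}$, one factors
\[\tau_\alpha \;=\; (-2\iu)^\ell\,(\sqrt{\omega})^{|\alpha|}\prod_{j}\sin(\pi a_j/K),\]
where $\ell$ is the common support size and the $a_j$'s enumerate the nonzero entries of $\alpha$ (analogously for $\beta$). The product of sines is a positive real, so equating $\tau_\alpha = \tau_\beta$ decouples into the phase identity $(\sqrt{\omega})^{|\alpha|} = (\sqrt{\omega})^{|\beta|}$---which gives condition 2 because $\sqrt{\omega}$ has order $2K$---and the modulus identity $\prod_j \sin(\pi a_j/K) = \prod_j \sin(\pi b_j/K)$.

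The main obstacle is extracting condition 3 from this sine identity. Setting $\tilde{a}_j := \min(a_j, K - a_j) \in \{1, \ldots, (K-1)/2\}$ (well defined since $\sin(\pi k/K) = \sin(\pi(K-k)/K)$), the identity becomes $\prod_j \sin(\pi \tilde a_j/K) = \prod_j \sin(\pi \tilde b_j/K)$, which in turn is equivalent to a multiplicative relation among the cyclotomic units $u_k := (1-\omega^k)/(1-\omega) = 1 + \omega + \cdots + \omega^{k-1}$ in $\mathbb{Z}[\omega]^\times$ modulo torsion. For $K = p$ prime, the classes $[u_2], \ldots, [u_{(p-1)/2}]$ form a $\mathbb{Z}$-linearly independent set in the torsion-free quotient of the unit group---a fact that follows either from Baker's theorem \cite{Baker} on linear forms in logarithms of algebraic numbers or, equivalently, from Dirichlet's unit theorem combined with the classical observation that these $(p-3)/2$ cyclotomic units generate a subgroup of full rank in the unit group of $\mathbb{Z}[\omega]$. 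Combined with $[u_1] = 1$ and the shared cardinality $\ell$ of the two multisets, this independence forces $\{\tilde a_j\} = \{\tilde b_j\}$ as multisets, which is precisely condition 3.
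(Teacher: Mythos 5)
Your proof is correct, and it follows the same broad strategy as the paper: factor $1-\omega^k = -2\iu(\sqrt{\omega})^k\sin(\pi k/K)$ to decouple $\tau_\alpha = \tau_\beta$ (given equal support sizes) into a phase condition, equivalent to $|\alpha|\equiv|\beta|\pmod{2K}$, and a modulus condition $\prod_j\sin(\pi \tilde a_j/K) = \prod_j\sin(\pi\tilde b_j/K)$. Your reverse direction is a direct computation rather than the decoupling, but that is a cosmetic difference. The genuine departure is in the modulus step. The paper invokes Pathak's result \cite{P} that $\{\log(2\sin(k\pi/K))\}_{k=1}^{(K-1)/2}$ is $\Z$-linearly independent for $K$ an odd prime, a theorem whose proof uses Baker's theorem. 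You instead observe that the relevant coefficient vector $\bigl(\widehat{\alpha}(k)-\widehat{\beta}(k)\bigr)_k$ sums to zero because the support sizes agree, so after normalizing by $\sin(\pi/K)^\ell$ the $k=1$ term drops out and one only needs multiplicative independence of the real cyclotomic units $\xi_k = \sin(k\pi/K)/\sin(\pi/K)$ for $2\le k\le(K-1)/2$. That independence is the classical Dirichlet--Kummer fact (the $(K-3)/2$ cyclotomic units span a finite-index, hence full-rank, subgroup of the rank-$(K-3)/2$ unit group), and requires no Baker. This is a real simplification and worth recording. Two small corrections to your write-up: (1) the word ``equivalently'' between the Baker route and the Dirichlet--Kummer route is misleading --- Baker yields the stronger full independence of all $(K-1)/2$ logarithms, while Dirichlet--Kummer gives only the relative independence, which suffices for you solely because of the zero-sum observation; (2) since the modulus identity involves the real numbers $|u_k|$ rather than $u_k$ itself, it is cleaner to phrase the argument directly in terms of the real cyclotomic units $\xi_k = \omega^{(1-k)(K+1)/2}u_k \in \Z[\omega+\omega^{-1}]^\times$, which are positive reals equal to $|u_k|$ and whose multiplicative independence as positive reals is exactly the statement you use.
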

\begin{proof}
	Recall that by definition, two monomials $z^\alpha$ and $z^\beta$ are inseparable if and only if they have the same support size and $\tau_\alpha=\tau_\beta$; that is,
	\[\prod_{j:\alpha_j\neq0}(1-\omega^{\alpha_j})=\prod_{j:\beta_j\neq0} (1-\omega^{\beta_j})\,,\]
	where $\omega = e^{2\pi \iu/\grpord}$.
	For these quantities to be equal, their respective moduli and arguments must coincide.
	
	To compare arguments, observe that for any multi-index $\sigma \in\{0,1,\ldots,\grpord-1\}^n$, by the identity \eqref{eq:trig-id} we may normalize $\tau_\sigma$ like so:
	\[\frac{\tau_\sigma}{|\tau_\sigma|}=\iu^{-|\supp(\sigma)|}\prod_{j=1}^n(\sqrt{\omega})^{\sigma_j}=\iu^{-|\supp(\sigma)|}(\sqrt{\omega})^{|\sigma|},\]
	where as before $\sqrt{\omega}=e^{\pi\iu/K}$.
	It is given that $|\supp(\alpha)|=|\supp(\beta)|$, so the arguments of $\tau_\alpha$ and $\tau_\beta$ are equal exactly when $(\sqrt{\omega})^{|\alpha|}=(\sqrt{\omega})^{|\beta|}$, or equivalently, $|\alpha| =|\beta| \text{ mod } 2\grpord$.
	
	As for the moduli, using the identity $|1-\omega^k| = 2\sin(k \pi/\grpord)$ we find for any multi-index $\sigma$ that
	\[|\tau_\sigma|=\prod_{j:\sigma_j\neq 0}2\sin(\sigma_j \pi / \grpord)=\prod_{j:\sigma_j\neq 0}2\sin(\min\{\sigma_j,\grpord-\sigma_j\}\cdot\pi / K)\,,\]
	where the last step follows from the symmetry of sine about $\pi/2$.
	
	So when are $|\tau_\alpha|$ and $|\tau_\beta|$ equal?
	By the last display, certainly they are the same if there is a bijection $\pi:\supp(\alpha)\to\supp(\beta)$ such that for all $j\in\supp(\alpha)$, $\alpha_j=\beta_{\pi(j)}$ or $\alpha_j = \grpord-\beta_{\pi(j)}$.
	Is this the only time $|\tau_\alpha|=|\tau_\beta|$?
	
	Returning to $\sigma$, define for $1\leq k \leq (\grpord-1)/2$ the quantity
	\[\widehat{\sigma}(k)=|\{j:\sigma_j=k \text{ or } \sigma_j=\grpord-k\}|\,.\]
	Then
	\[\log(|\tau_\sigma|)=\sum_{k=1}^{(\grpord-1)/2}\widehat{\sigma}(k)\cdot\log(2 \sin(k \pi/\grpord)).\]
	Therefore if the numbers
	\[\{b_k:=\log(2 \sin(k \pi/\grpord)), k=1,\ldots,(\grpord-1)/2\}\]
		 were linearly independent over $\Z$, the only way $|\tau_\alpha|=|\tau_\beta|$ is the existence of a bijection $\pi$ as above.
		 
	Conveniently, the question of the linear independence of the $b_k$'s has already appeared in a different context, concerning an approach of Livingston to resolve a folklore conjecture of Erd\" os on the vanishing of certain Dirichlet $L$-series.
	It was answered in \cite{P} in the positive for $\grpord\geq 3$ prime and in the negative for all composite $\grpord\geq 4$ using several tools including Baker's celebrated theorem on linear forms in logarithms of algebraic numbers \cite{Baker}.
\end{proof}

Finally, recalling \eqref{ineq:generalized cauchy} that Theorem \ref{thm: remez} implies $\|f_k\|_{\Om_\grpord^n}\Lesssim{d,K}\|f\|_{\Om_\grpord^n}$ for all $k$-homogeneous parts $f_k$ of $f$, $0\leq k \leq d$, we may conclude by Proposition \ref{prop:Property A}:
\begin{corollary}
	\label{cor:bounded-parts}
	Suppose $\grpord$ is an odd prime and let $S$ be a maximal subset of $\{0,1,\ldots, \grpord-1\}^n$ such that for all $\alpha, \beta\in S$:
	\begin{itemize}
		\item Support sizes are equal: $|\supp(\alpha)|=|\supp(\beta)|$.
		\item Degrees are equal: $|\alpha|=|\beta|$.
		\item Individual degree symmetry: there is a bijection $\pi:\supp(\alpha)\to\supp(\beta)$ such that for all $j\in\supp(\alpha)$, $\alpha_j=\beta_{\pi(j)}$ or $\alpha_j = \grpord-\beta_{\pi(j)}$.
	\end{itemize}
	Then for any $n$-variate analytic polynomial $f$ of degree at most $d$ and individual degree at most $K-1$, the $S$-part of $f$, \emph{i.e.,}
	$f_S=\sum_{\alpha\in S}\widehat{f}(\alpha)z^\alpha$,
	satisfies:
	\[\|f_S\|_{\Om_\grpord^n}\Lesssim{d,\grpord}\|f\|_{\Om_\grpord^n}\,.\]
	
\end{corollary}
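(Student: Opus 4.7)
The plan is to combine three ingredients already assembled in the paper: the Cauchy-type estimate on $\Om_\grpord^n$ that follows from Theorem \ref{thm: remez}, the boundedness of inseparable parts (Property A, Proposition \ref{prop:Property A}), and the explicit characterization of inseparability for prime $\grpord$ given by the preceding proposition. The corollary's sets $S$ will turn out to be exactly the inseparable parts of $k$-homogeneous components of $f$, so the bound is obtained by chaining two known estimates.

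First I would decompose $f=\sum_{k=0}^{d}f_k$ into its $k$-homogeneous parts. By the remark following Theorem \ref{thm: remez}, $\|f_k\|_{\Om_\grpord^n}\le \|f_k\|_{\T^n}\le\|f\|_{\T^n}\Lesssim{d,\grpord}\|f\|_{\Om_\grpord^n}$, so each $f_k$ is controlled by $f$ on $\Om_\grpord^n$ up to a constant depending only on $d$ and $\grpord$.

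Next, fix $S$ as in the statement. Every multi-index in $S$ has the same exact total degree, call it $k$, so $f_S$ is a sub-polynomial of $f_k$. The key observation is that within the $k$-homogeneous polynomial $f_k$ the condition ``degrees equal mod $2\grpord$'' appearing in the preceding proposition's characterization of inseparability for prime $\grpord$ is satisfied automatically, so inseparability of monomials inside $f_k$ reduces to the two remaining conditions: equal support sizes and the individual-degree symmetry. But these are exactly the nontrivial conditions defining the corollary's $S$ (the requirement ``degrees equal'' having become redundant once all monomials sit in $f_k$). Hence $f_S$ coincides with one of the inseparable parts of $f_k$ in the sense of Proposition \ref{prop:Property A}, and that proposition yields $\|f_S\|_{\Om_\grpord^n}\Lesssim{d,\grpord}\|f_k\|_{\Om_\grpord^n}$.

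Composing the two estimates gives $\|f_S\|_{\Om_\grpord^n}\Lesssim{d,\grpord}\|f\|_{\Om_\grpord^n}$, which is the desired conclusion. I do not foresee a genuine obstacle: the work is essentially bookkeeping, and the only nontrivial point is recognizing that, once one has split off a single homogeneous part, the Corollary's three conditions and the Proposition's inseparability conditions describe the same equivalence classes for prime $\grpord$. The primality hypothesis enters only through the preceding proposition, which relies on Baker's theorem to exclude further collisions among the moduli $|\tau_\alpha|$.
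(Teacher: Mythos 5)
Your proposal is correct and takes essentially the same route as the paper: the paper's own proof is the one-line remark preceding the corollary, which combines the Cauchy-type estimate $\|f_k\|_{\Om_\grpord^n}\Lesssim{d,\grpord}\|f\|_{\Om_\grpord^n}$ (a consequence of Theorem~\ref{thm: remez}) with Property~A applied to the homogeneous part $f_k$, exactly as you describe. Your key bookkeeping observation---that within a fixed $k$-homogeneous part the ``degree equal mod $2\grpord$'' clause of the prime-$\grpord$ inseparability characterization becomes automatic, so the corollary's equivalence classes restricted to degree $k$ coincide with the inseparable parts of $f_k$---is precisely the point the paper leaves implicit, and you have stated it correctly.
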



\section*{Acknowledgments} 
A.V. is grateful to Nazarov for encouraging discussions. The authors are grateful to the anonymous reviewers for their helpful comments. This work was started while all three authors were in residence at the Institute for Computational and Experimental Research in Mathematics in Providence, RI, during the Harmonic Analysis and Convexity program. It is partially supported by NSF  DMS-1929284.

\bibliographystyle{amsplain}


\begin{dajauthors}
\begin{authorinfo}[joe]
  Joseph Slote\\
  Department of Computing and Mathematical Sciences, California Institute of Technology\\
  Pasadena, CA 91125\\
  USA\\
  jslote\imageat{}caltech\imagedot{}edu \\
  \url{https://joeslote.com/}
\end{authorinfo}

\begin{authorinfo}[sasha]
  Alexander Volberg\\
  Department of Mathematics, Michigan State University\\
East Lansing, MI 48823\\
USA\\
and \\
Hausdorff Center for Mathematics, University of Bonn\\
Bonn 53115,\\
Germany\\
volberg\imageat{}msu\imagedot{}edu \\
\url{https://users.math.msu.edu/users/volberg/}
\end{authorinfo}

\begin{authorinfo}[haonan]
  Haonan Zhang\\
  Department of Mathematics, University of South Carolina\\
  Columbia, SC 29208\\
  USA\\
  haonanzhangmath\imageat{}gmail\imagedot{}com\\
  \url{https://sites.google.com/view/haonanzhangmathsearch-console/home}
\end{authorinfo}

\end{dajauthors}

\end{document}